\theoremstyle{plain}
\newtheorem{theorem}{Theorem}[section]
\newtheorem{lemma}[theorem]{Lemma}
\newtheorem{proposition}[theorem]{Proposition}
\newtheorem{corollary}[theorem]{Corollary}
\theoremstyle{definition}
\newtheorem{definition}[theorem]{Definition}
\newtheorem{example}[theorem]{Example}
\theoremstyle{remark}
\newtheorem{rem}[theorem]{Remark}
\newcommand{\sM}{\mathcal{M}}
\newcommand{\fC}{\mathfrak{C}}
\newcommand{\DD}{\mathbb{D}}
\newcommand{\QQ}{\mathbb{Q}}
\newcommand{\R}{\mathbb{R}}
\renewcommand{\SS}{\mathbb{S}}
\newcommand{\XX}{\mathbb{X}}
\newcommand{\eps}{\varepsilon}
\newcommand{\one}{\mathbf{1}}
\newcommand{\thf}{\frac{1}{2}}
\newcommand{\Ind}{\mathbf{1}}
\renewcommand{\P}{\mathbf{P}}
\newcommand{\E}{\mathbf{E}}
\newcommand{\eqd}{\stackrel{\scriptscriptstyle{\mathcal{D}}}{\sim}}
\newcommand{\iid}{i.\,i.\,d.\xspace}
\newcommand{\as}{a.\,s.\xspace}
\newcommand{\salg}{\mathfrak{F}}
\newcommand{\cadlag}{c\`adl\`ag\xspace}
\newcommand{\ie}{i.\,e.\xspace}
\newcommand{\eg}{e.\,g.\xspace}
\begin{document}

\title{Series Representation of Time-Stable Stochastic Processes}

\author{Christoph Kopp \and\ Ilya Molchanov}

\maketitle

\begin{abstract}
  A stochastically continuous process $\xi(t)$, $t\geq0$, is said to
  be time-stable if the sum of $n$ \iid copies of $\xi$ equals in
  distribution to the time-scaled stochastic process $\xi(nt)$,
  $t\geq0$. The paper advances the understanding of time-stable
  processes by means of their LePage series representations.
\end{abstract}

\section{Introduction}

The (strict) stability property of stochastic processes is
conventionally defined by requiring that the sum of \iid copies of a
process is distributed as the scaled variant of the original
process. An alternative scaling operation applied to the time argument
leads to another definition of stability.

\begin{definition}
  \label{def1}
  A stochastically continuous real-valued process $\xi(t)$, $t\geq0$,
  is said to be \emph{time-stable} if, for each $n\geq 2$,
  \begin{equation}
    \label{eq:ts-def}
    \xi_1+\cdots+\xi_n \eqd n\circ\xi\,,
  \end{equation}
  where $\xi_1,\dots,\xi_n$ are \iid copies of $\xi$, $\eqd$ means the
  equality of all finite-dimensional distributions and
  $(n\circ\xi)(t)=\xi(nt)$, $t\geq0$, is the process obtained by time
  scaling $\xi$.
\end{definition}

Definition~\ref{def1} goes back to Mansuy~\cite{RM}, where processes
satisfying \eqref{eq:ts-def}, regardless of their stochastic
continuity, are called \emph{infinitely divisible with respect to
  time} (IDT). Indeed, they are infinitely divisible in the sense that
$\xi$ can be represented as the sum of $n$ \iid processes for each
$n\geq2$. However, the time-stable name better emphasises the
particular stability feature of such processes. These processes have
been recently investigated in \cite{EO} and \cite{hak:ouk13}, also
with a multivariate time argument. Time-stable processes with values
in $\R^d$ can be defined similarly to Definition~\ref{def1}.

The major difficulty in the analysis of time-stable processes stems
from the necessity to work with the whole paths of the processes. The
time stability concept cannot be formulated in terms of
finite-dimensional distributions at any given time moments, since the
time argument in the right-hand side of \eqref{eq:ts-def} is scaled.

Definition~\ref{def1} can be modified to introduce
$\alpha$-time-stable processes as
\begin{displaymath}
  \xi_1+\cdots+\xi_n \eqd n^{1/\alpha}\circ\xi\,,
\end{displaymath}
where each $\alpha\neq 0$ is possible. This concept appears in
\cite[Ex.~8.12]{DMZ} as an example of the stability property in the
cone of continuous functions with scaling applied to the
argument. While such processes (for general $\alpha$) have been
considered in \cite{hak:ouk12a}, the process $\xi(t^{1/\alpha})$,
$t\geq0$, obtained by time change is time-stable (with $\alpha=1$) and
so it is not necessary to study $\alpha$-time stability for general
$\alpha\neq1$.

Another closely related concept is that of a \emph{dilatively stable
  process} $\zeta$ that satisfies the following scaling relation for
some $\alpha>0$, $\delta\in(0,2\alpha]$, and all $n\geq2$:
\begin{displaymath}
  \zeta_1+\cdots+\zeta_n \eqd n^{1/2-\alpha/\delta}(n^{1/\delta}\circ\zeta)\,,
\end{displaymath}
see \cite{igl:bar12}, where such processes are also assumed to possess
moments of all orders. If $\zeta$ is dilatively stable, then
$\xi(t)=t^{1/2-\alpha/\delta}\zeta(t^{1/\delta})$, $t\geq0$, satisfies
\eqref{eq:ts-def} and so is a time-stable process if it is
stochastically continuous.

Barczy et al. \cite{bar:ker:pap14} defined
\emph{$(\rho_1,\rho_2)$-aggregate self-similar processes} $\zeta$ by the
following scaling relation
\begin{displaymath}
  \zeta_1+\cdots+\zeta_n \eqd n^{\rho_1}(n^{-\rho_2}\circ\zeta)\,,
\end{displaymath}
so that for $\rho_1=\rho_2$ one recovers the aggregate similar process
from \cite{kaj05}. It is easy to see that
$t^{\rho_1}\zeta(t^{-\rho_2})$, $t\geq0$, satisfies \eqref{eq:ts-def},
so that this and other above mentioned generalisations may be obtained
by time and scale change from time-stable processes.  A similar
concept was studied by Penrose~\cite{pen92}, who called a stochastic
process $\xi$ \emph{semi-min-stable} if $\min(\xi_1(t),\dots,\xi_n(t))$
shares the finite-dimensional distributions with $n^{-1}\xi(n^\alpha
t)$, $t\geq0$.

Section~\ref{sec:exampl-basic-prop} discusses elementary properties
and examples of time-stable processes. The infinite divisibility of
such processes makes it possible to use their spectral representation
obtained in \cite{kab:stoev14} and then show that the L\'evy measure
is homogeneous with respect to time scaling, see
Section~\ref{sec:spectr-repr}. The main result of
Section~\ref{sec:lepage-series} and of the whole paper is the LePage
representation of time-stable processes whose L\'evy measures are
supported by the family of right-continuous functions with left
limits. In particular, this is the case for non-negative
processes. The obtained LePage representation yields the series
representations for dilatively stable and aggregate self-similar
processes. The structure of pure jump time-stable processes is closely
related to the stability property of marked point processes; in this
case the LePage representation is similar to the cluster
representation of infinitely divisible point processes, see
Section~\ref{sec:pure-jump-time}.

The concept of time stability allows generalisations in various
directions. The necessary structure consists of a time set which is
invariant under scaling by arbitrary positive real numbers and an
associative and commutative binary operation which is applied
pointwisely to the values of processes. For instance, the definition
applies also to stochastic processes defined on the whole line and on
$\R^d$ or with addition replaced by the maximum operation.

While \eqref{eq:ts-def} actually defines a \emph{strictly} time-stable
stochastic process, the stability concept can be relaxed by replacing
the right hand side with $n\circ\xi+f_n$ for deterministic functions
$\{f_n\}$. Furthermore, it is possible to consider random measures
stable with respect to scaling of their argument (see
\cite[Ex.~8.23]{DMZ}) and generalised stochastic processes (\ie random
generalised functions).

\section{Elementary properties and examples} 
\label{sec:exampl-basic-prop}

The following result provides an alternative definition of time-stable
processes.

\begin{proposition}
  \label{prop:equiv-ts}
  A stochastically continuous process $\xi(t)$, $t\geq0$, is
  time-stable if and only if
  \begin{equation}
    \label{eq:ab-tst}
    a\circ\xi_1+b\circ\xi_2\eqd (a+b)\circ\xi
  \end{equation}
  for all $a,b>0$, where $\xi_1$ and $\xi_2$ are independent copies of
  $\xi$.
\end{proposition}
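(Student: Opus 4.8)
The plan is to prove the two implications separately, with the reverse implication ($\Leftarrow$) being a short induction and the forward implication ($\Rightarrow$) carrying the real content.

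For the reverse implication, suppose \eqref{eq:ab-tst} holds for all $a,b>0$. Taking $a=b=1$ gives \eqref{eq:ts-def} for $n=2$, since $1\circ\xi_i=\xi_i$. I would then argue by induction on $n$: assuming $\xi_1+\cdots+\xi_{n-1}\eqd(n-1)\circ\xi$, note that the left-hand side is independent of $\xi_n$ and that the finite-dimensional distributions of a sum of independent processes are the convolutions of those of the summands; hence $\xi_1+\cdots+\xi_n$ has the same finite-dimensional distributions as $(n-1)\circ\eta+1\circ\xi_n$ for an independent copy $\eta$ of $\xi$. Applying \eqref{eq:ab-tst} with $a=n-1$, $b=1$ turns this into $n\circ\xi$, which completes the induction and yields Definition~\ref{def1}.

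For the forward implication, assume $\xi$ is time-stable. I would first establish \eqref{eq:ab-tst} for positive rational $a,b$. Write $a=p/q$ and $b=r/q$ with $p,q,r$ positive integers (a common denominator always exists). Splitting $\xi_1+\cdots+\xi_{p+r}\eqd(p+r)\circ\xi$ into the two independent blocks $\xi_1+\cdots+\xi_p$ and $\xi_{p+1}+\cdots+\xi_{p+r}$, and using $\xi_1+\cdots+\xi_p\eqd p\circ\xi$ and $\xi_{p+1}+\cdots+\xi_{p+r}\eqd r\circ\xi$ (Definition~\ref{def1} applied with $n=p$ and $n=r$, the case $n=1$ being trivial), one gets $p\circ\xi'+r\circ\xi''\eqd(p+r)\circ\xi$ for independent copies $\xi',\xi''$ of $\xi$; here the joint finite-dimensional law of the two blocks is the product of the marginals by independence, so it matches that of the pair $(p\circ\xi',r\circ\xi'')$. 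Composing both sides with the deterministic time scaling $t\mapsto t/q$, which commutes with addition and with $\circ$ and merely relabels time points, gives $(p/q)\circ\xi'+(r/q)\circ\xi''\eqd((p+r)/q)\circ\xi$, i.e.\ \eqref{eq:ab-tst} for the pair $(a,b)$.

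It then remains to pass from rational to arbitrary $a,b>0$, and this is the one step where stochastic continuity is used and where a little care is needed. Fix $a,b>0$ and pick positive rationals $a_k\to a$, $b_k\to b$. For any $t_1,\dots,t_d\ge0$, stochastic continuity gives $\xi_1(a_kt_i)\to\xi_1(at_i)$ and $\xi_2(b_kt_i)\to\xi_2(bt_i)$ in probability, so the vectors $\bigl(\xi_1(a_kt_i)+\xi_2(b_kt_i)\bigr)_{i\le d}$ converge in probability, hence in distribution, to $\bigl((a\circ\xi_1+b\circ\xi_2)(t_i)\bigr)_{i\le d}$; similarly $(a_k+b_k)\circ\xi$ converges in finite-dimensional distributions to $(a+b)\circ\xi$. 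Since the two sides of \eqref{eq:ab-tst} coincide in distribution for every $k$, the limits coincide as well, giving \eqref{eq:ab-tst} for all $a,b>0$. The main obstacle is thus not a deep one: it is making the rational-to-real limiting argument precise, everything else being elementary bookkeeping with \iid copies.
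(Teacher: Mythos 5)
Your proof is correct and follows exactly the route the paper sketches: sufficiency by induction from the case $a=b=1$, necessity first for rationals $p/q$, $r/q$ by splitting $p+r$ \iid copies into blocks and rescaling time by $1/q$, then for all positive reals by a rational approximation using stochastic continuity. The paper only states this outline in one line; your write-up supplies the details consistently with it, and the limiting argument via convergence in probability of finite-dimensional vectors is the right way to make the approximation step precise.
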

\begin{proof}
  The sufficiency is evident. The necessity is first shown for
  positive rational $a,b$ and then for all positive $a,b$ by
  approximation using the stochastic continuity assumption.
\end{proof}

Each L\'evy process is time-stable. If $\xi$ is time-stable, then it
is infinitely divisible, so that there always exists the unique L\'evy
process $\tilde{\xi}$, called the \emph{associated L\'evy process} of
$\xi$, such that $\tilde{\xi}(1)$ coincides in distribution with
$\xi(1)$. Since $\xi$ is stochastically continuous, $\tilde{\xi}(t)$
coincides in distribution with $\xi(t)$ for each given $t$ (see
\cite[Prop.~4.1]{RM}), so that the characteristic function of $\xi(t)$
is given by
\begin{equation}
  \label{timestab.char.eq}
  \E\exp\{\imath\lambda\xi(t)\}
  = \exp\{-t\Psi(\lambda)\}\,,\quad t\geq0\,,
\end{equation}
where $\Psi$ denotes the cumulant of $\tilde{\xi}$. In view of this,
$\Psi$ is also called the cumulant of $\xi$. The higher order
finite-dimensional distributions of $\xi$ and $\tilde{\xi}$ may
differ.

It follows from \eqref{timestab.char.eq} that $\xi(t)$ weakly
converges to $0$ as $t\downarrow0$, which corresponds to the
stochastic continuity of $\xi$, since $\xi(0)=0$ \as by
\eqref{eq:ts-def}. The only \as constant time stable process is the
zero process.

Comparing the one-dimensional distributions shows that if the
non-degenerate time-stable process is \as non-negative for any
$t\geq0$, then it is \as non-negative everywhere, its one-dimensional
distributions are increasing in the stochastic order, and
$\sup_{t\geq0}\xi(t)$ is \as infinite. In contrast to L\'evy
processes, non-negative time-stable processes need not be
a.\,s. monotone, see Example~\ref{example:scaled}.

\begin{theorem}
  \label{thr:rep}
  A time-stable process $\xi$ is identically distributed as the sum of
  a linear function, a centred Gaussian process with the covariance
  function $C(t,s)$ that satisfies $C(ut,us)=uC(t,s)$ for all
  $t,s,u\geq0$, and an independent time-stable process without
  Gaussian component.
\end{theorem}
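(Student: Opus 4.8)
The plan is to decompose $\xi$ via its Lévy–Khinchine structure inherited from the associated Lévy process $\tilde\xi$ and to show that each piece---drift, Gaussian, and jump---is itself time-stable, with the Gaussian covariance satisfying the stated homogeneity. First I would use the characteristic function formula~\eqref{timestab.char.eq} together with the classical Lévy–Khinchine representation of the infinitely divisible law of $\xi(1)$ to write the cumulant as $\Psi(\lambda)=-\imath b\lambda+\tfrac12\sigma^2\lambda^2+\int(1-e^{\imath\lambda x}+\imath\lambda x\Ind_{|x|\le1})\,\nu(dx)$, so that $\xi(t)$ has cumulant $t\Psi(\lambda)$ for every $t$. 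The deterministic linear part $t\mapsto bt$ is manifestly time-stable (it is a degenerate Lévy process), so after subtracting it I may assume $\xi$ has no drift in the one-dimensional sense.

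Next I would invoke the spectral/Lévy-measure representation of time-stable processes promised in Section~\ref{sec:spectr-repr}: a time-stable process is infinitely divisible as a process, hence has a path-space Lévy measure $\fM$ on the space of functions, and time stability forces $\fM$ to be homogeneous under the time-scaling action $\omega\mapsto u\circ\omega$ (this is exactly what the excerpt says is proved there). The Gaussian component of an infinitely divisible process corresponds to a centred Gaussian process $\eta$, independent of the jump part; write $\xi \eqd \ell + \eta + \xi_0$ where $\ell$ is linear, $\eta$ is centred Gaussian with covariance $C$, and $\xi_0$ is the jump part governed by $\fM$. Because the decomposition into Gaussian and non-Gaussian parts is canonical and respects the convolution in~\eqref{eq:ts-def}, applying~\eqref{eq:ts-def} to $\xi$ and matching Gaussian components on both sides shows $\eta_1+\cdots+\eta_n \eqd n\circ\eta$; similarly $\xi_0$ alone satisfies~\eqref{eq:ts-def}, i.e.\ is time-stable without Gaussian component. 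Finally, for the covariance: $n\circ\eta$ has covariance $C(nt,ns)$, while $\eta_1+\cdots+\eta_n$ has covariance $nC(t,s)$, giving $C(nt,ns)=nC(t,s)$ for integer $n$; Proposition~\ref{prop:equiv-ts} applied to $\eta$ upgrades this to $C(ut,us)=uC(t,s)$ for all real $u>0$ (test with $a\circ\eta_1+b\circ\eta_2\eqd(a+b)\circ\eta$ and read off covariances, or approximate rationals using stochastic continuity of $\eta$).

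The step I expect to be the main obstacle is justifying that the Gaussian–jump decomposition of the \emph{process} (not merely of a single marginal) is well defined and that each summand is again stochastically continuous and time-stable. Concretely, one must argue that the path-space Lévy–Itô type decomposition produces a genuine centred Gaussian process $\eta$ and an independent process $\xi_0$ whose Lévy measure $\fM$ puts no mass on ``Gaussian-like'' directions, and that the equality in~\eqref{eq:ts-def} descends to each component. This should follow from uniqueness of the Lévy–Khinchine triple of an infinitely divisible element in the relevant space of processes together with the fact that the time-scaling operation $u\circ$ is linear and commutes with convolution, so it acts separately on the Gaussian covariance operator and on the Lévy measure; stochastic continuity of $\eta$ and $\xi_0$ is inherited from that of $\xi$ since the decomposition is continuous. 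Once this structural point is in place, the covariance functional equation and the claim that $\xi_0$ has no Gaussian component are immediate, and Proposition~\ref{prop:equiv-ts} closes the argument.
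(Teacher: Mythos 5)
Your core idea---matching the Gaussian and non-Gaussian parts of the L\'evy--Khinchine decomposition on the two sides of \eqref{eq:ts-def} and reading off $C(nt,ns)=nC(t,s)$, then upgrading to real $u$ via Proposition~\ref{prop:equiv-ts} and stochastic continuity---is exactly the paper's argument. The difference is where the decomposition lives. The paper works entirely with \emph{finite-dimensional} distributions: for each $(t_1,\dots,t_k)$ the vector $(\xi(t_1),\dots,\xi(t_k))$ is infinitely divisible in $\R^k$, its L\'evy triplet (drift, Gaussian covariance matrix, L\'evy measure on $\R^k$) is unique, and comparing the triplet of the $n$-fold convolution with that of $(\xi(nt_1),\dots,\xi(nt_k))$ yields the homogeneity of the drift, of $C$, and of the finite-dimensional L\'evy measures; the resulting consistent families define the three independent summands, and since the theorem only asserts equality of finite-dimensional distributions, nothing more is needed. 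Your detour through a path-space L\'evy measure is where your ``main obstacle'' arises, and in fact it is not available at this point of the paper: the spectral representation of Section~\ref{sec:spectr-repr} is quoted only for infinitely divisible processes \emph{without} a Gaussian component, so invoking it to produce the Gaussian/jump splitting is circular. Once you replace that step by the finite-dimensional triplet comparison, the obstacle disappears and your proof coincides with the paper's.
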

\begin{proof}
  Since $\xi$ is infinitely divisible, its finite-dimensional
  distributions are infinitely divisible. The rest follows by
  comparing the L\'evy triplets of the $n$-fold convolution of
  $(\xi(t_1),\dots,\xi(t_k))$ and of $(\xi(nt_1),\dots,\xi(nt_k))$ for
  any $t_1,\dots,t_k\geq 0$ and $k,n\geq1$.
\end{proof}

Various characterisations of Gaussian time-stable processes are
presented in \cite{RM}.
In the following we mostly consider time-stable processes without
a Gaussian part.

\begin{example}[cf. \cite{RM}]
  \label{example:scaled}
  Let $\xi$ be a time-stable process. For $c_1,\dots,c_m\in \R$ and
  $s_1,\dots,s_m\geq 0$, 
  \begin{displaymath}
    \tilde{\xi}(t)=\sum_{i=1}^m c_i \xi(ts_i),\quad t\geq 0\,, 
  \end{displaymath}
  is a time-stable process.
  For instance, if $N(t)$, $t\geq0$, is a homogeneous Poisson process,
  then $N(2t)-N(t)$, $t\geq0$, is a non-monotonic a.s. non-negative
  time-stable process with the standard Poisson process as its
  associated L\'evy process.
\end{example}

\begin{example}[cf. \cite{RM}]
  \label{stastex2}
  Let $\zeta$ be a strictly $\alpha$-stable random variable with
  $\alpha\in(0,2]$. Then $\xi(t)=t^{1/\alpha}\zeta$, $t\geq0$, is a
  time-stable stochastic process. In particular, if $\zeta$ is
  normally distributed, then $\xi(t)=\sqrt{t}\zeta$ is time-stable. If
  $\alpha=1$, then $\xi(t)=\zeta t$ for the Cauchy random variable
  $\zeta$. This yields a time-stable process with stationary
  increments, which is not a L\'evy process. This construction can be
  generalised by letting
  \begin{equation}
    \label{eq:stastex2}
    \xi(t)=\sum_{i=1}^m c_i t^{1/\alpha_i}\zeta_i\,,\quad t\geq0\,,
  \end{equation}
  for constants $c_1,\dots,c_m$ and independent random variables
  $\zeta_1,\dots,\zeta_m$ that are strictly stable with parameters
  $\alpha_1,\dots,\alpha_m$. The independence condition can be relaxed
  if the vector $\zeta=(\zeta_1,\dots,\zeta_m)$ is multi-stable meaning
  that the sum of its $n$ \iid copies is distributed as
  $(n^{1/\alpha_1}\zeta_1,\dots,n^{1/\alpha_m}\zeta_m)$ for every $n\geq2$.
\end{example} 

The following construction is similar to the construction of
sub-stable random elements, see \cite[Sec.~1.3]{ST}.

\begin{example}[Sub-stable processes] 
  \label{ex:sub-stable}
  Consider the process $X(t)=\xi(t^{1/\alpha}\zeta)$, $t\geq0$, where
  $\xi$ is time-stable and $\zeta$ is an independent of $\xi$ positive
  strictly $\alpha$-stable random variable with $\alpha\in(0,1)$.
  Conditioning on \iid copies $\zeta_1,\zeta_2$ of $\zeta$ and using
  Proposition~\ref{prop:equiv-ts}, we obtain that for \iid copies
  $X_1,X_2$ of $X$ and any $a,b>0$
  \begin{align*}
    a\circ X_1(t) + b\circ X_2(t)
    &=\xi_1(a^{1/\alpha}\zeta_1t^{1/\alpha})+\xi_2(b^{1/\alpha}\zeta_2t^{1/\alpha})\\
    &\eqd \xi((a^{1/\alpha}\zeta_1+b^{1/\alpha}\zeta_2)\,t^{1/\alpha})\\
    &\eqd \xi((a+b)^{1/\alpha}\zeta t^{1/\alpha})
    =(a+b)\circ X(t)\,,\quad t\geq 0\,,
  \end{align*}
  showing the time stability of $X$.
\end{example}

\begin{example}[Subordination by time-stable processes]
  \label{ex:subord}
  Let $X$ be a L\'evy process and let $\xi$ be an \as non-decreasing
  independent of $X$ time-stable process. Then $X(\xi(t))$ is also
  time-stable. This fact was proved using characteristic functions in
  \cite[Th.~3.6]{EO}.  The famous variance gamma process used in
  finance (see, e.\,g. \cite{mad:car:chan98}) appears if $X=W$ is the
  Brownian motion and $\xi(t)$ is the gamma subordinator. The
  construction from Example~\ref{example:scaled} suggests further
  variants of the variance gamma process, \eg $W(\xi(t)+\xi(2t))$.
\end{example}

A stochastic process $\xi$ is called \emph{$H$-self-similar} if for
any $c>0$
\begin{equation}
  \label{eq:selfsim-def}
  c\circ\xi \eqd c^{H} \xi,
\end{equation}
where $H>0$ is called the \emph{Hurst index} or self-similarity
index. 
A time-stable process is strictly $\alpha$-stable if and only
if it is $1/\alpha$-self-similar, see \cite[Prop.~3.1]{EO}.
By \cite[Th.~7.5.4]{ST}, any time-stable process, which is also
$\alpha$-stable with $\alpha\in(0,1)$ and has stationary increments,
is a L\'evy process.

\begin{example}[Sub-Gaussian time-stable process]
  \label{ex:sub-g}
  Each Gaussian time-stable process is $\thf$-self-similar. Therefore,
  a fractional Brownian motion $\eta(t)$, $t\geq 0$, with Hurst index
  $H\neq\thf$ is not time-stable. Since a Gaussian self-similar
  process with stationary increments is necessarily a fractional
  Brownian motion, see \cite[Th.~1.3.3]{EM}, the only Gaussian
  time-stable process with stationary increments is the Brownian
  motion. It is possible to obtain a sub-Gaussian time-stable process
  as
  \begin{displaymath}
    \xi(t)=Z^{1/2}\eta(t^{1/(2\alpha H)}),\qquad t\geq0\,,
  \end{displaymath}
  where $\alpha\in(0,1)$ and $Z$ is an \as positive $\alpha$-stable
  random variable independent of $\eta$. In particular, if
  $H=1/(2\alpha)$ for $\alpha\in[1/2,1)$, then $\xi(t)=Z^{1/2}\eta(t)$
  and if $H=\thf$, then $\xi(t)=Z^{1/2}\eta(t^{1/\alpha})$ for the
  Brownian motion $\eta$.
\end{example}

\section{L\'evy measures of time-stable processes}
\label{sec:spectr-repr}

The time stability property of the process $\xi$ implies its infinite
divisibility.  Each stochastically continuous process satisfies
Condition S from \cite{ST}, which requires the existence of an at most
countable set $T_0\subset[0,\infty)$ such that for all $t\geq0$, there
exists a sequence $t_n\in T_0$, $n\geq1$, such that $\xi(t_n)$
converges to $\xi(t)$ in probability.  The spectral representation of
infinitely divisible stochastic processes that satisfy Condition S and
do not possess a Gaussian component is obtained in
\cite[Th.~2.14]{kab:stoev14} using a Poisson process on a certain
space $(\Omega,\salg)$ with a $\sigma$-finite measure
$\mu$. Reformulating this result for $(\Omega,\salg)$ being the space
$\R^{[0,\infty)}$ with the cylindrical $\sigma$-algebra $\fC$ yields
that an infinitely divisible stochastically continuous process $\xi$
admits a spectral representation
\begin{equation}
  \label{eq:spectr-int}
  \xi(t)\eqd c(t) +\int f(t) d\Pi_Q(f)\,,
\end{equation}
where $c$ is a deterministic function and $\Pi_Q=\{f_i(t):\; i\geq
1\}$ is the Poisson process on $\R^{[0,\infty)}\setminus\{0\}$ with a
$\sigma$-finite intensity measure $Q$ such that
\begin{equation}
  \label{eq:mu-condition}
  \int_{\R^{[0,\infty)}\setminus\{0\}} \min(1,f(t)^2)Q(df)<\infty
\end{equation}
for all $t\geq0$. The measure $Q$ is called the \emph{L\'evy measure}
of $\xi$. The integral with respect to $\Pi_Q$ in
\eqref{eq:spectr-int} is defined as the \as existing limit of the compensated
sums
\begin{equation}
  \label{eq:compensated-sum}
  \lim_{r\downarrow 0} \left[\sum_{f_i\in \Pi_Q}
  f_i(t)\one_{|f_i(t)|>r}
  -\int_{\{f:\;|f(t)|>r\}} L(f(t))Q(df)\right]\,,
\end{equation}
where 
\begin{equation}
\label{eq:levy.fun}
  L(u)=
  \begin{cases}
    u, & |u|\leq 1\,,\\
    1, & u>1\,,\\
    -1, & u<-1\,,
  \end{cases}
\end{equation}
is a L\'evy function, see also \cite{mar70}. 

Furthermore, \cite[Th.~2.14]{kab:stoev14} ensures the existence of a
\emph{minimal} spectral representation, meaning that the
$\sigma$-algebra generated by $\{f:\; f(t)\in A\}$ for all $t\geq 0$
and Borel $A\subset\R$ coincides with the cylindrical $\sigma$-algebra
$\fC$ on $\R^{[0,\infty)}$ up to $Q$-null sets and there is no set $B
\in \fC$ with $Q(B)>0$ such that for every $t\geq0$, $Q(\{f\in B:\;
f(t)\neq 0\})=0$. In the following assume that the $\sigma$-algebra on
$\R^{[0,\infty)}$ is complete with respect to $Q$. By
\cite[Th.~2.17]{kab:stoev14}), the minimal spectral representation is
unique up to an isomorphism, and so the L\'evy measure is well
defined. 

The stochastic continuity of $\xi$ yields that $\xi$ has a measurable
modification, see \cite[Th.~3.3.1]{GS}. Then
\cite[Prop.~2.19]{kab:stoev14} establishes that the representation
\eqref{eq:spectr-int} involves a measurable function $c(t)$, $t\geq0$,
and that the functions $f$ can be chosen to be strongly separable. The
latter means that there exists a measurable null-set
$\Omega_0\subset\R^{[0,\infty)}$ and a countable set
$\QQ\subset[0,\infty)$ (called a separant) such that, for each open
$G\subset [0,\infty)$ and closed $F\subset\R$, we have
\begin{equation}
  \label{eq:strongly-sep}
  \{f:\; f(t)\in F\;\forall t\in G\cap\QQ\}\setminus
  \{f:\; f(t)\in F\; \forall t\in G\}
  \subset \Omega_0\,.
\end{equation}

The process $\xi$ is symmetric, i.\,e. $\xi$ coincides in distribution
with $-\xi$, if and only if its L\'evy measure $Q$ is symmetric
and $c$ identically vanishes. Then the compensating second term
in \eqref{eq:compensated-sum} vanishes for every $r>0$.

If \eqref{eq:mu-condition} is strengthened to require
\begin{equation}
  \label{eq:mu-condition-1}
  \int_{\R^{[0,\infty)}\setminus\{0\}} \min(1,|f(t)|)Q(df)<\infty\,,
\end{equation}
then the integral \eqref{eq:spectr-int} is well defined without taking the
limit and without the compensating term in \eqref{eq:compensated-sum},
so that
\begin{equation}
  \label{eq:4}
  \xi(t)\eqd c(t)+\sum_{f_i \in \Pi_Q} f_i(t)
\end{equation}
for a deterministic function $c$. It is well known that
\eqref{eq:mu-condition-1} holds if $\xi(t)$ is \as non-negative for
all $t\geq0$, see \eg \cite[Th.~51.1]{KS}.

Representation \eqref{eq:4} shows that if the process is monotone
(resp. non-negative), then the measure $Q$ is supported by monotone
(resp. non-negative) functions.

\begin{lemma}
  \label{lemma:measurab}
  For each Borel $B\subset\R^{[0,\infty)}$ and $s>0$, the set $s\circ
  B=\{s\circ f:\; f\in B\}$ is also Borel in $\R^{[0,\infty)}$.
\end{lemma}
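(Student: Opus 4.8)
The plan is to exhibit the time-scaling operation as a bi-measurable bijection of $\R^{[0,\infty)}$ onto itself. For $s>0$ define $\phi_s\colon\R^{[0,\infty)}\to\R^{[0,\infty)}$ by $(\phi_sf)(t)=f(st)$, $t\geq0$, so that $s\circ B=\phi_s(B)$. Since $(\phi_{1/s}\phi_sf)(t)=(\phi_sf)(t/s)=f(t)$, and symmetrically $\phi_s\phi_{1/s}=\mathrm{id}$, the map $\phi_s$ is a bijection with inverse $\phi_{1/s}$.

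Next I would check that $\phi_s$ is measurable with respect to the cylindrical $\sigma$-algebra $\fC$ (equivalently, continuous for the product topology). This is immediate from the fact that a map into a product space is measurable as soon as each of its coordinates is: the $t$-th coordinate of $\phi_s$ is the map $f\mapsto(\phi_sf)(t)=f(st)$, which is just the coordinate projection at the point $st$, and such projections generate $\fC$. The same argument applies verbatim to $\phi_{1/s}$, so both $\phi_s$ and $\phi_{1/s}$ are $\fC$-measurable (resp. continuous).

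Combining the two observations, for any $B\in\fC$ (resp. any Borel $B$ for the product topology) one has $s\circ B=\phi_s(B)=\phi_{1/s}^{-1}(B)$, the preimage of a measurable (resp. Borel) set under the measurable (resp. continuous) map $\phi_{1/s}$; hence $s\circ B$ is again measurable (resp. Borel). The one point worth flagging is that one should not simply invoke ``the continuous image of a Borel set is Borel'', since in general such images are only analytic; it is precisely the bijectivity of $\phi_s$ together with the measurability of its inverse $\phi_{1/s}$ that turns the forward image into a preimage and makes the argument go through. No estimates are involved, so there is no real obstacle beyond this bookkeeping point.
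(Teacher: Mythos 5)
Your proof is correct and amounts to the same argument as the paper's: the paper verifies the claim on cylinder sets and closes up by a monotone class (good sets) argument, while you package exactly the same content by showing the scaling map is a bijection that is measurable on coordinates together with its inverse, so that the forward image becomes a preimage. Your explicit remark that one cannot just cite ``continuous images of Borel sets are Borel'' is a worthwhile clarification of what the paper's one-line proof leaves implicit (bijectivity is what makes the complement step of the monotone class argument work).
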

\begin{proof}
  The set $s\circ B$ is Borel if $B$ is a cylinder, and the statement
  follows from the monotone class argument.
\end{proof}
The next result follows from the fact that $\xi(0)=0$ \as for a
time-stable process $\xi$.

\begin{lemma}
  \label{lem:zero}
  The L\'evy measure of a time-stable process is supported by
  $\{f\in\R^{[0,\infty)}\setminus\{0\}:\; f(0)=0\}$. 
\end{lemma}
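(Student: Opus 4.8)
The plan is to read off the support property from the one-dimensional marginal of $\xi$ at time $t=0$. Since $\xi$ is time-stable we already know that $\xi(0)=0$ \as, so the law of $\xi(0)$ is the degenerate distribution at $0$, which, viewed as an infinitely divisible law, has trivial L\'evy measure (and no Gaussian part). On the other hand, evaluating the spectral representation \eqref{eq:spectr-int} at $t=0$ exhibits $\xi(0)$ as an infinitely divisible random variable whose L\'evy measure is the image $Q_0$ of $Q$ under the coordinate map $\pi_0\colon f\mapsto f(0)$, restricted to $\R\setminus\{0\}$. Uniqueness of the L\'evy--Khintchine representation then forces $Q_0$ to vanish on $\R\setminus\{0\}$, which is exactly the claim.

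Concretely, I would first note that $\pi_0$ is measurable with respect to the cylindrical $\sigma$-algebra, so $Q_0$ is a well-defined measure on $\R$, and that \eqref{eq:mu-condition} at $t=0$ gives $\int\min(1,x^2)\,Q_0(dx)<\infty$, so that $Q_0|_{\R\setminus\{0\}}$ is a genuine L\'evy measure. Using \eqref{eq:compensated-sum} at $t=0$ (equivalently, the L\'evy--Khintchine formula for the infinitely divisible marginal $\xi(0)$), the characteristic function of $\xi(0)$ equals
\begin{displaymath}
  \E\exp\{\imath\lambda\xi(0)\}
  =\exp\Big\{\imath\lambda c(0)+\int_{\R\setminus\{0\}}\big(e^{\imath\lambda x}-1-\imath\lambda L(x)\big)\,Q_0(dx)\Big\},\qquad\lambda\in\R.
\end{displaymath}
Since $\xi(0)=0$ \as, the left-hand side is identically $1$, hence so is its modulus, and this yields $\int_{\R\setminus\{0\}}(1-\cos(\lambda x))\,Q_0(dx)=0$ for every $\lambda\in\R$.

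Because $1-\cos(\lambda x)\geq0$, for each fixed $\lambda$ we obtain $\cos(\lambda x)=1$ for $Q_0$-almost every $x$; intersecting these full-measure sets over $\lambda\in\QQ$ shows that for $Q_0$-almost every $x$ one has $\cos(\lambda x)=1$ for all rational $\lambda$, which forces $x=0$. Hence $Q_0(\R\setminus\{0\})=0$, i.e. $Q(\{f:\;f(0)\neq0\})=0$, so $Q$ is supported by $\{f\in\R^{[0,\infty)}\setminus\{0\}:\;f(0)=0\}$. The only point that needs a little care is the identification of the L\'evy measure of the marginal $\xi(0)$ with the pushforward $Q_0$ in the general case, where \eqref{eq:mu-condition-1} may fail and the representation involves the compensated sums in \eqref{eq:compensated-sum}; this is part of the general machinery behind \eqref{eq:spectr-int}, and alternatively one can note directly that for every $r>0$ the compound Poisson variable $\sum_{f_i\in\Pi_Q}f_i(0)\one_{|f_i(0)|>r}$ is \as constant, which already forces $Q_0(\{|x|>r\})=0$.
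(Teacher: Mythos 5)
Your argument is correct and follows exactly the route the paper intends: the paper gives no separate proof, merely remarking that the lemma ``follows from the fact that $\xi(0)=0$ a.s.\ for a time-stable process,'' and your write-up supplies precisely the missing details (the marginal at $t=0$ is degenerate, its L\'evy measure is the pushforward $Q_0$ of $Q$ under $f\mapsto f(0)$, and uniqueness of the L\'evy--Khintchine representation forces $Q_0(\R\setminus\{0\})=0$). The only soft spot is your closing ``alternative'' claim that each truncated sum $\sum_{f_i\in\Pi_Q} f_i(0)\one_{|f_i(0)|>r}$ is a.s.\ constant, which does not follow immediately from $\xi(0)=0$ a.s.\ alone; but that remark is dispensable, since your main characteristic-function argument is already complete.
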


\begin{lemma}
  \label{lemma:scaling}
  An infinitely divisible stochastically continuous process $\xi$
  without a Gaussian component is time-stable if and only if
  $c(t)=bt$, $t\geq0$, is a linear function for a constant $b\in\R$
  and the L\'evy measure $Q$ satisfies
  \begin{equation}
    \label{eq:q-scales}
    Q(s\circ B)=s^{-1}Q(B),\qquad s>0\,,
  \end{equation}
  for all Borel $B\subset\R^{[0,\infty)}$. 
\end{lemma}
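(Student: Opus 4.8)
The plan is to translate the time-stability relation \eqref{eq:ts-def}, or rather its equivalent form \eqref{eq:ab-tst} from Proposition~\ref{prop:equiv-ts}, into a statement about spectral representations, and then invoke the uniqueness of the minimal representation from \cite{kab:stoev14}. First I would record how the operations in \eqref{eq:ab-tst} act on a spectral triplet: if $\xi$ has representation \eqref{eq:spectr-int} with deterministic part $c$ and L\'evy measure $Q$, then $a\circ\xi$ is infinitely divisible and stochastically continuous with deterministic part $t\mapsto c(at)$ and L\'evy measure equal to the pushforward $a\circ Q$ of $Q$ under the map $f\mapsto a\circ f$, which is well defined as a measure on $\R^{[0,\infty)}\setminus\{0\}$ by Lemma~\ref{lemma:measurab} and Lemma~\ref{lem:zero} (the latter guarantees $a\circ f\neq 0$ whenever $f\neq 0$, since $f(0)=0$ and so $f$ is not supported at the single point $0$; more carefully, $a\circ f$ vanishes identically iff $f$ does). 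One must also check that the scaled measure still satisfies the integrability condition \eqref{eq:mu-condition}; this is immediate because $\int\min(1,f(at)^2)\,Q(df)<\infty$ is just \eqref{eq:mu-condition} evaluated at the point $at$.

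Next, for independent copies the L\'evy measure of a sum is the sum of the L\'evy measures and the deterministic parts add. Hence the left-hand side of \eqref{eq:ab-tst} has deterministic part $t\mapsto c(at)+c(bt)$ and L\'evy measure $a\circ Q + b\circ Q$, while the right-hand side has deterministic part $t\mapsto c((a+b)t)$ and L\'evy measure $(a+b)\circ Q$. By the uniqueness part of \cite[Th.~2.17]{kab:stoev14} (the minimal representation is unique up to isomorphism, and the L\'evy measure is thereby well defined), equality in distribution of the two processes forces, after passing to minimal representations,
\begin{equation}
  \label{eq:plan-two-eqns}
  c(at)+c(bt)=c((a+b)t)\quad\text{and}\quad a\circ Q+b\circ Q=(a+b)\circ Q
\end{equation}
for all $a,b>0$ and $t\geq0$. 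The first identity is the Cauchy functional equation in the variable $s=t$ with $a,b$ acting as scaling; together with the measurability of $c$ (guaranteed by \cite[Prop.~2.19]{kab:stoev14}, which gives a measurable modification) it yields $c(t)=bt$ for some constant $b\in\R$. The measure identity must be converted into the homogeneity \eqref{eq:q-scales}.

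To extract \eqref{eq:q-scales} from the additive relation $a\circ Q+b\circ Q=(a+b)\circ Q$, I would first set $a=b$ to get $2(a\circ Q)=(2a)\circ Q$ for all $a>0$, and then, writing $g(s):=$ "the measure $s^{-1}$-normalisation behaviour", iterate to obtain $n\circ Q = n\,(1\circ Q)\circ(\text{appropriate scaling})$; more directly, define for each Borel $B$ the function $\phi_B(s)=Q(s\circ B)$ and observe that \eqref{eq:plan-two-eqns} translates (by applying both sides to $s\circ B$ and using $a\circ(b\circ f)=(ab)\circ f$) into a multiplicative Cauchy-type relation $\phi_B(as_0)+\phi_B(bs_0)=\phi_B((a+b)s_0)$ — no, more honestly one gets $\phi_B$ additive after the substitution $s\mapsto 1/s$, so that $\phi_B(s)=\phi_B(1)/s$ once one knows $\phi_B$ is finite for $Q$-finite $B$ and suitably monotone/measurable in $s$; this gives exactly $Q(s\circ B)=s^{-1}Q(B)$. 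The converse direction is routine: if $c(t)=bt$ and \eqref{eq:q-scales} holds, then reversing the computation above shows that the spectral triplet of $a\circ\xi_1+b\circ\xi_2$ coincides with that of $(a+b)\circ\xi$, hence they are equal in distribution, and Proposition~\ref{prop:equiv-ts} gives time stability.

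\textbf{Main obstacle.} The delicate point is the rigorous passage from "equality of all finite-dimensional distributions" to "equality of minimal L\'evy measures". The representation \eqref{eq:spectr-int} is not itself unique; only the \emph{minimal} one is, and one must argue that the scaled/summed representations, once reduced to minimal form, can be compared. This requires checking that $a\circ Q$ (and the sum $a\circ Q+b\circ Q$) still generates a representation satisfying the minimality conditions recalled after \eqref{eq:mu-condition}, or alternatively that the L\'evy measure attached to a given infinitely divisible stochastically continuous process by \cite[Th.~2.17]{kab:stoev14} is genuinely a distributional invariant, so that it suffices to compute it on \emph{any} representation. I expect the cleanest route is the latter: treat "L\'evy measure" as a well-defined functional of the law of the process, compute it on the convenient (non-minimal but explicit) representations of both sides of \eqref{eq:ab-tst}, and conclude equality of the measures directly — the passage through minimality then happens only inside the cited uniqueness theorem and need not be re-done here. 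A secondary, purely technical nuisance is the interplay between the scaling map $f\mapsto a\circ f$ and the measurable/strong-separability structure on $\R^{[0,\infty)}$, but Lemma~\ref{lemma:measurab} is exactly what is needed to see that $a\circ Q$ is a bona fide Borel measure.
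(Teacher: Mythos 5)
Your proposal is correct and follows essentially the same route as the paper: reduce \eqref{eq:ab-tst} to equality of spectral triplets via the uniqueness of the minimal representation in \cite[Th.~2.17]{kab:stoev14}, read off the Cauchy functional equations for $c$ and for $s\mapsto Q(s^{-1}\circ B)$, and solve them using measurability of $c$ and non-negativity of $Q$ (the paper invokes \cite[Th.~1.1.7]{bin:gol:teu87} at exactly the point where you appeal to finiteness and monotonicity of $\phi_B$). The only difference is one of emphasis: the paper states the necessity argument more tersely, while you spell out the minimality/invariance issue and the measurability of the pushforward (via Lemma~\ref{lemma:measurab}) that the paper leaves implicit.
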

\begin{proof}
  The sufficiency follows because the finite-dimensional
  distributions of $\xi$ admit the characteristic function
  \begin{multline}
    \label{eq:pgf-Q}
    \E \exp\{\imath \sum_j \theta_j \xi(t_j)\}\\
    =\exp\left\{\imath b\sum_j\theta_jt_j+\int\left[e^{\imath \sum_j \theta_j
          f(t_j)}-1 -\imath \sum_j \theta_j L(f(t_j))\right]
      Q(df)\right\}\,.
  \end{multline}
  Now assume that $\xi$ is time-stable.  Comparing the characteristic
  functions of the finite-dimensional distributions for the processes
  in the left and right-hand side of \eqref{eq:ab-tst} shows that the
  function $c$ is linear.

  The L\'evy measure corresponding to the minimal spectral
  representation of the process in the left-hand side of
  \eqref{eq:ab-tst} is $Q(a^{-1}\circ B)+Q(b^{-1}\circ B)$. In view of
  the uniqueness of the minimal spectral representation
  \cite[Th.~2.17]{kab:stoev14}, the L\'evy measures of the processes
  in the left and right-hand sides of \eqref{eq:ab-tst} coincide.
  Thus
  \begin{displaymath}
    Q(a^{-1}\circ B)+Q(b^{-1}\circ B)=Q((a+b)^{-1}\circ B)
  \end{displaymath}
  for all $a,b>0$ and all Borel $B\subset\R^{[0,\infty)}$. Given that
  $Q$ is non-negative, \cite[Th.~1.1.7]{bin:gol:teu87} yields that
  $Q(s^{-1}\circ B)$ is a linear function of $s$,
  i.\,e. \eqref{eq:q-scales} holds.
\end{proof}

The same scaling property for the L\'evy measure appears in
\cite[Lemma~5.1]{RM} and later on was reproduced in
\cite[Prop.~4.1]{hak:ouk12a} for time-stable processes with paths in
the Skorokhod space of right-continuous functions with left limits
(\cadlag functions). The proof there is however incomplete, since it
is not shown that the L\'evy measure of such a process is supported by
\cadlag functions.

\begin{proposition}
  \label{prop:d}
  If $\xi(t)$, $t\geq0$, is a time-stable \cadlag process with
  \as non-negative values, then its L\'evy measure $Q$ is supported
  by \cadlag functions. 
\end{proposition}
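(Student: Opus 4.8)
The plan is to work with the series representation \eqref{eq:4}. Since $\xi$ is \as non-negative, \eqref{eq:mu-condition-1} holds and $\xi(t)\eqd bt+\sum_{f_i\in\Pi_Q}f_i(t)$, where by Lemma~\ref{lemma:scaling} the drift $c(t)=bt$ is linear, $b\geq0$ because $\xi\geq0$, and the measure $Q$ satisfies the scaling relation \eqref{eq:q-scales} and is carried by non-negative functions that vanish at $0$ (by the remark after \eqref{eq:4} and Lemma~\ref{lem:zero}); by \cite[Prop.~2.19]{kab:stoev14} these functions may be taken strongly separable with a countable dense separant $\QQ$. All suprema and limits in $t$ below are understood along $\QQ$, which renders ``$f$ is \cadlag'' a $\fC$-measurable property. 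Since $f\mapsto s\circ f$ preserves both \eqref{eq:q-scales} and the class of \cadlag functions, and $Q$ is carried by $\bigcup_{T,n}\{f:\ \sup_{t\in\QQ\cap(0,T]}f(t)>1/n\}$, it suffices to prove that $Q(\{f:\ \sup_{t\in\QQ\cap(0,1]}f(t)>\eps,\ f\text{ is not \cadlag}\})=0$ for every $\eps>0$.

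The first step is the a priori estimate
\begin{equation*}
  Q\bigl(\{f:\ \sup\nolimits_{t\in\QQ\cap(0,T]}f(t)>\eps\}\bigr)<\infty,\qquad T>0,\ \eps>0.
\end{equation*}
Because $b\geq0$ and $f_j\geq0$, one has $\xi(t)\geq f_i(t)$ for all $i$ and $t$, so if $\Pi_Q$ has an atom in $\{f:\sup_{\QQ\cap(0,\delta]}f>\eps\}$ then $\sup_{\QQ\cap(0,\delta]}\xi>\eps$; hence the probability $1-\exp(-Q(\{f:\sup_{\QQ\cap(0,\delta]}f>\eps\}))$ is at most $\P(\sup_{\QQ\cap(0,\delta]}\xi>\eps)$, which tends to $0$ as $\delta\downarrow0$ because $\xi$ is \cadlag with $\xi(0)=0$. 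Thus the $Q$-measure above is finite for all small $\delta$, and by \eqref{eq:q-scales} the function $T\mapsto Q(\{f:\sup_{t\in\QQ\cap(0,T]}f(t)>\eps\})$ is linear, hence finite for every $T$.

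In particular $\Pi_Q$ \as has only finitely many atoms in $B_\eps:=\{f:\sup_{t\in\QQ\cap(0,1]}f(t)>\eps\}$. Fix $\eps\in(0,1)$ with $Q(B_\eps)>0$ (if $Q(B_\eps)=0$ for all such $\eps$ then $Q\equiv0$ and there is nothing to prove), and split $\Pi_Q$ into the independent Poisson processes $\Pi_Q|_{B_\eps}$ (of finite intensity $Q(B_\eps)$) and $\Pi_Q|_{B_\eps^c}$. On the event of positive probability that $\Pi_Q|_{B_\eps}$ has exactly one atom $h$, this atom has law $Q|_{B_\eps}/Q(B_\eps)$ and is independent of $R:=\sum_{f_i\in B_\eps^c}f_i$, and $\xi=bt+h+R$. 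Suppose $Q(B_\eps\cap\{f\text{ not \cadlag}\})>0$. Then for every $k\geq1$, which is an event of positive probability, $\Pi_Q|_{B_\eps}$ has $k$ atoms that are \iid copies of $h$, at least one of them non-\cadlag with positive probability, while $\xi$ remains the sum of $bt$, these $k$ atoms, and the independent $R$, and is \cadlag. To reach a contradiction one argues that such an irregularity (a missing one-sided limit, or a value disagreeing with a one-sided limit) cannot be cancelled: if it occurs at a random location whose law is not a point mass, then that location is independent of everything else in the decomposition and so cannot be matched; and if it occurs at a deterministic time $t_0$ with a fixed size, then taking $k$ large piles up at $t_0$ a discrepancy of unbounded size in the sum of the atoms, which the independent $R$ cannot absorb. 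This forces $Q(B_\eps\cap\{f\text{ not \cadlag}\})=0$ for every admissible $\eps$, and the reduction above then completes the proof.

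I expect the last step to be the main obstacle: it requires controlling, uniformly over the uncountably many possible non-\cadlag behaviours of $h$ and without any path regularity of the remainder $R$, the assertion that a positive probability of one summand being non-\cadlag propagates to $\xi$. An alternative, if the relevant machinery is invoked, is to note that \eqref{eq:ts-def} makes $\xi$ infinitely divisible as a random element of the Skorokhod space (its law there being determined by the finite-dimensional distributions of the \cadlag process $\xi$), whence the L\'evy--Khintchine representation on that space furnishes a L\'evy measure carried by \cadlag functions; by the uniqueness of the minimal spectral representation \cite[Th.~2.17]{kab:stoev14} this measure coincides with $Q$, with the estimate of the second step still needed to ensure that the integrals in play are finite.
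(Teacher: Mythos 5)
Your reduction (restricting to the series representation \eqref{eq:4}, noting $Q$ lives on non-negative functions vanishing at $0$, and the a priori estimate $Q(\{f:\sup_{t\in\QQ\cap(0,T]}f(t)>\eps\})<\infty$ via the Poisson void probability and the scaling \eqref{eq:q-scales}) is sound. But the decisive step is missing, and you say so yourself: the claim that a non-\cadlag atom $h$ of $\Pi_Q$ forces $\xi$ to be non-\cadlag with positive probability is exactly the hard part, and the dichotomy you offer does not close it. A function can fail to be \cadlag in ways that are not localised at a single ``random location with a size'' (e.g.\ oscillation without one-sided limits on a dense or uncountable set), the remainder $R=\sum_{f_i\in B_\eps^c}f_i$ has no a priori path regularity whatsoever (so ``cannot be matched'' and ``cannot be absorbed'' are assertions, not arguments), and the ``piling up at $t_0$'' case ignores that the conditional law of $\xi$ given $\Pi_Q(B_\eps)=k$ is not the law of $\xi$, so no contradiction with the \cadlag property of $\xi$ is actually derived. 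Making this cancellation argument rigorous requires a zero--one law for path properties of sums of independent processes (It\^o--Nisio-type arguments), which is precisely the content of the result you would need to invoke rather than re-derive.

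The paper sidesteps all of this in two lines: it symmetrizes, considering $\xi-\xi'$ for an independent copy $\xi'$, which is a symmetric infinitely divisible \cadlag process, and then quotes Rosi\'nski \cite[Th.~4]{ros89p}, which states that the L\'evy measure of such a symmetric process is supported by \cadlag functions; since the L\'evy measure of $\xi-\xi'$ is $B\mapsto Q(B)+Q(-B)$, the support of $Q$ is contained in that of the symmetrized measure, and the conclusion follows. Symmetrization is the key trick you are missing: it is what makes the It\^o--Nisio/zero--one machinery applicable and lets one cite an off-the-shelf theorem. Your closing ``alternative'' (infinite divisibility in Skorokhod space plus uniqueness of the minimal representation) points in roughly the right direction but is likewise only a sketch; as written, the proposal does not constitute a proof.
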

\begin{proof}
  In this case the L\'evy measure $Q$ satisfies
  \eqref{eq:mu-condition-1} and so $\xi$ admits the representation
  \eqref{eq:4}. If $\xi'$ is an independent copy of $\xi$, then
  $\xi-\xi'$ is symmetric and has the series decomposition with the
  L\'evy measure supported by \cadlag functions, see
  \cite[Th.~4]{ros89p}. The support of $Q$ is a subset of the support
  of the L\'evy measure for $\xi-\xi'$.
\end{proof}

\section{ LePage series representation}
\label{sec:lepage-series}

In finite-dimensional spaces, L\'evy measures of strictly stable laws
admit a polar decomposition into the product of a radial and a finite
directional part and the corresponding sum (if necessary compensated)
of points of the Poisson process is known as the LePage series, see
\cite[Cor.~3.10.4]{ST} and \cite{lep:wood:zin81,rosi90}. The LePage
series can be defined in general spaces \cite{DMZ}, where they provide
a rich source of stable laws and in many cases characterise stable
laws. However, the general conditions of \cite{DMZ} do not hold for
the case of time-stable processes. 

The following result provides the LePage series characterisation for
time-stable processes without a Gaussian part and whose L\'evy measure
is supported by the family $\DD'$ of not identically vanishing \cadlag
functions on $[0,\infty)$. We endow the family $\DD'$ with the
topology and the $\sigma$-algebra induced from $\R^{[0,\infty)}$. Let
$\DD'_0$ be the family of not identically vanishing \cadlag functions
that vanish at the origin.

\begin{theorem}
  \label{thr:lp-stab}
  The following statements are equivalent for a stochastically
  continuous process $\xi(t)$, $t\geq0$.
  \begin{itemize}
  \item[i)] The process $\xi$ is time-stable without a Gaussian part
    and with the L\'evy measure $Q$ supported by $\DD'$.
  \item[ii)] The stochastic process $\xi$ is infinitely divisible
    without a Gaussian part, with a deterministic linear part, its
    L\'evy measure $Q$ is supported by $\DD'_0$, and
    \begin{equation}
      \label{eq:q-lp}
      Q(B)=\int_0^\infty \sigma(t\circ B) dt
    \end{equation}
    for all Borel sets $B\subset \DD'_0$ and a probability measure
    $\sigma$ on $\DD'_0$ such that
    \begin{equation}
      \label{eq:5}
      \int_{\DD'_0}
      \int_0^\infty \min(1,f(t)^2)t^{-2}dt\sigma(df)<\infty\,.
    \end{equation}
  \item[iii)] The stochastic process $\xi$ has the same distribution
    as 
    \begin{multline}
      \label{eq:lepage-compensated}
      ct+ \lim_{r\downarrow 0} \Bigg[\sum_{i=1}^\infty
      \eps_i(\Gamma_i^{-1}t)\one_{|\eps_i(\Gamma_i^{-1}t)|>r}\\
      -\E \int_0^\infty L(\eps(s^{-1}t))\one_{|\eps(s^{-1}t)|>r}\,ds\Bigg]\,,
      \quad t\geq 0\,,
    \end{multline}
    where the limit exists almost surely, $c\in\R$ is a constant, $L$
    is defined as in \eqref{eq:levy.fun} and $\{\eps_i,i\geq1\}$ is a
    sequence of \iid stochastic processes distributed as $\eps$, where
    $\eps$ \as takes values in $\DD'_0$,
    \begin{equation}
      \label{eq:eps-2}
      \E \int_0^\infty \min(1,\eps(t)^2)t^{-2}dt<\infty\,,
    \end{equation}
    and $\{\Gamma_i,i\geq1\}$ is the sequence of successive points of
    a homogeneous unit intensity Poisson process on $[0,\infty)$.
  \end{itemize}
\end{theorem}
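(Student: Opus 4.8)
The plan is to prove the cycle of implications (i)$\Rightarrow$(ii)$\Rightarrow$(iii)$\Rightarrow$(i). The two implications not having (i) as a hypothesis are essentially bookkeeping organised around the change of variables $\gamma\mapsto t=\gamma^{-1}$, under which Lebesgue measure $d\gamma$ on $(0,\infty)$ turns into $t^{-2}\,dt$ and the map $g\mapsto\gamma^{-1}\circ g$ into $g\mapsto t\circ g$; the implication (i)$\Rightarrow$(ii), a polar-type decomposition of the L\'evy measure with respect to time scaling, carries the real content.

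For (i)$\Rightarrow$(ii) I would first apply Lemmas~\ref{lem:zero} and~\ref{lemma:scaling} to reduce to the situation in which $Q$ is supported by $\DD'_0$, the deterministic part is $bt$, and $Q(s\circ B)=s^{-1}Q(B)$ for all $s>0$ and Borel $B$. The space $\DD'_0$ is standard Borel (it is Polish in the Skorokhod topology and its Borel $\sigma$-algebra is generated by the coordinate maps), and time scaling $f\mapsto s\circ f$ is a free Borel action of $(0,\infty)$ on it, since a \cadlag function vanishing at the origin that is invariant under some non-trivial time scaling vanishes identically. The heart of the matter is to construct a Borel \emph{radius} $\mathfrak r\colon\DD'_0\to(0,\infty)$ homogeneous of degree $-1$, i.e.\ $\mathfrak r(s\circ f)=s^{-1}\mathfrak r(f)$; a workable choice is $\mathfrak r(f)=\inf\{t\ge0:|f(t)|\ge c(f)\}$, where the $\circ$-invariant level is $c(f)=1$ if $\sup_t|f(t)|=\infty$ and $c(f)=\tfrac12\sup_t|f(t)|$ otherwise, which is finite (values above $c(f)$ are attained at finite times), strictly positive ($f(0)=0$ and $f$ is right-continuous) and Borel (hitting times of \cadlag paths at a path-dependent level). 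With $S=\{g\in\DD'_0:\mathfrak r(g)=1\}$, the map $f\mapsto(\mathfrak r(f),\,\mathfrak r(f)\circ f)$ is a Borel isomorphism of $\DD'_0$ onto $(0,\infty)\times S$ intertwining $s\circ(\cdot)$ with $(\rho,g)\mapsto(s^{-1}\rho,g)$, and pushing $Q$ through it the relation $Q(s\circ B)=s^{-1}Q(B)$ forces the image measure to be the product of Lebesgue measure $d\rho$ with a $\sigma$-finite measure $\sigma'$ on $S$, so that $Q(B)=\int_0^\infty\sigma'(t\circ B)\,dt$.

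It then remains to normalise $\sigma'$ and to verify \eqref{eq:5}. The representation just obtained gives, by a change of variables,
\begin{displaymath}
  \int_{\DD'_0}\int_0^\infty\min(1,f(t)^2)\,t^{-2}\,dt\,\sigma'(df)
  =\int_{\DD'_0}\min(1,f(1)^2)\,Q(df)<\infty
\end{displaymath}
by \eqref{eq:mu-condition}, so $h(f):=\int_0^\infty\min(1,f(t)^2)t^{-2}\,dt$ is strictly positive and $\sigma'$-integrable, and the sets $\{h>1/n\}$ exhaust $S$ with finite $\sigma'$-measure---this is the only place the integrability of $Q$ is genuinely needed. Picking a Borel $c_0\colon S\to(0,\infty)$ with $\int_S c_0^{-1}\,d\sigma'=1$ (possible since $\sigma'$ is $\sigma$-finite) and replacing $\sigma'$ by its push-forward $\sigma$ under $g\mapsto c_0(g)\circ g$ keeps $Q(B)=\int_0^\infty\sigma(t\circ B)\,dt$ and, because $h(c\circ g)=c\,h(g)$, also keeps the displayed double integral unchanged; hence $\sigma$ is a probability measure and \eqref{eq:5} holds.

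For (ii)$\Rightarrow$(iii) and (iii)$\Rightarrow$(i) I would use the marking theorem in both directions. Given \eqref{eq:q-lp}, with $\{\epsilon_i\}$ \iid copies of $\epsilon\sim\sigma$ and $\{\Gamma_i\}$ an independent unit-rate Poisson process on $[0,\infty)$, the pairs $\{(\Gamma_i,\epsilon_i)\}$ form a Poisson process on $[0,\infty)\times\DD'_0$ with intensity $d\gamma\,\sigma(dg)$, and its image under the Borel map $(\gamma,g)\mapsto\gamma^{-1}\circ g$ (Borel by Lemma~\ref{lemma:measurab}) is a Poisson process whose intensity, after $\gamma=t^{-1}$, equals $\int_0^\infty\sigma(t\circ\cdot)\,dt=Q$. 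Feeding this realisation of $\Pi_Q$ into the spectral representation \eqref{eq:spectr-int}--\eqref{eq:compensated-sum} with linear deterministic part $ct$ reproduces \eqref{eq:lepage-compensated}, the compensator becoming $\E\int_0^\infty L(\epsilon(s^{-1}t))\one_{|\epsilon(s^{-1}t)|>r}\,ds$ under the same substitution, and the \as existence of the limit holds because \eqref{eq:mu-condition} is equivalent to \eqref{eq:5}, i.e.\ to \eqref{eq:eps-2}. Conversely, reading \eqref{eq:lepage-compensated} as a spectral representation exhibits $\xi$ as infinitely divisible with linear deterministic part and L\'evy measure $Q(B)=\int_0^\infty\sigma(t\circ B)\,dt$ (for $\sigma$ the law of $\epsilon$), which is supported by $\DD'_0\subset\DD'$, has no Gaussian component, and automatically satisfies \eqref{eq:q-scales}; Lemma~\ref{lemma:scaling} then yields time stability, which is (i). The step I expect to be the main obstacle is the construction of $\mathfrak r$ and the measurable selection it provides---ensuring that it is Borel, everywhere finite and strictly positive, exactly homogeneous, and that the scaling relation indeed forces the product structure on $(0,\infty)\times S$; once this is in place, the remainder is changes of variables and appeals to the spectral representation theory recalled in Section~\ref{sec:spectr-repr}.
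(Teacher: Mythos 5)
Your skeleton is the same as the paper's: reduce to the homogeneity \eqref{eq:q-scales} of the L\'evy measure via Lemmas~\ref{lem:zero} and~\ref{lemma:scaling}, obtain a polar decomposition of $Q$ with respect to the time-scaling action and renormalise the angular part to a probability measure, and pass between (ii) and (iii) by the marking and mapping theorems for Poisson processes together with the equivalence of \eqref{eq:mu-condition}, \eqref{eq:5} and \eqref{eq:eps-2}. Those last steps, and (iii)$\Rightarrow$(i), are fine and agree with the paper.

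The gap sits exactly at the point you flag but then treat as settled: the claim that $Q(s\circ B)=s^{-1}Q(B)$ ``forces'' the push-forward $\hat Q$ of $Q$ under $f\mapsto(\mathfrak r(f),\mathfrak r(f)\circ f)$ to be the product of Lebesgue measure with a \emph{$\sigma$-finite} measure $\sigma'$ on $S$. Homogeneity only gives $\hat Q(I\times A)=|I|\,\sigma'(A)$ on rectangles, with $\sigma'(A):=\hat Q([1,2]\times A)$; unless $\sigma'$ is $\sigma$-finite this does not identify $\hat Q$ with a product measure, and Tonelli --- which you need for the identity $\int_S h\,d\sigma'=\int\min(1,f(1)^2)\,Q(df)$ --- is unavailable. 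So your verification of $\sigma$-finiteness invokes the product structure it is meant to establish: the argument is circular as written. This is precisely the difficulty the paper's proof is built around: the $Q$-measure of a radial shell $\{s\circ f:\ f\in\SS_k,\ s\in[1,1+\delta]\}$ cannot be bounded by evaluating paths at a fixed time unless one first guarantees that the paths stay bounded away from zero on $[1,1+\delta]$, whence the further splitting of $\SS_k$ into the sets $\SS_{kj}$ by the dwell time $\Delta_k$ and the resulting finite bound $q_{kj}\leq 2^{2k+2}\int\min(1,f(1)^2)Q(df)$ for the angular measures $\eta_{kj}$. The gap is fillable --- either by that decomposition or by an averaging argument that deduces $\sigma$-finiteness of $\sigma'$ directly from the $\sigma$-finiteness of $\hat Q$ and the homogeneity --- but it is the technical heart of the implication (i)$\Rightarrow$(ii) and cannot be asserted. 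Two smaller points: with ``$\geq$'' in the definition of $\mathfrak r$ the hitting time of a \cadlag path is not determined by its values on a countable separant (a continuous path may touch the level $\tfrac12\sup|f|$ only at an irrational time), so Borel measurability with respect to the cylindrical $\sigma$-algebra is not cheap; use strict inequality as the paper does for $\tau_k$. And the plain push-forward of $\sigma'$ under $g\mapsto c_0(g)\circ g$ does not preserve \eqref{eq:q-lp}: the substitution $t\mapsto t/c_0(g)$ produces a Jacobian factor $c_0(g)$, so one must push forward the weighted measure $c_0^{-1}\,d\sigma'$ (whose total mass is then $1$) rather than $\sigma'$ itself.
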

\begin{proof}
  By Lemma~\ref{lemma:scaling}, a time-stable process can be
  alternatively described as an infinitely divisible stochastically
  continuous process whose L\'evy measure $Q$ satisfies
  \eqref{eq:q-scales}. It is obvious that $Q$ given by \eqref{eq:q-lp}
  satisfies \eqref{eq:q-scales}. It remains to show that the scaling
  property \eqref{eq:q-scales} yields \eqref{eq:q-lp}, so that (i)
  implies (ii).

  The following construction is motivated by the argument used to
  prove \cite[Th.~10.3]{evan:mol14}. By Lemma~\ref{lem:zero}, $Q$ is
  supported by $\DD'_0$.  Decompose $\DD'_0$ into the union of
  disjoint sets
  \begin{displaymath}
    \XX_0=\{f:\; \sup_{t\geq0} |f(t)|>1\}\,,
  \end{displaymath}
  and 
  \begin{displaymath}
    \XX_k=\{f:\; \sup_{t\geq0} |f(t)|\in (2^{-k},2^{-k+1}]\}\,,\quad k\geq 1\,.
  \end{displaymath}
  Recall the separant $\QQ$ and the exceptional set $\Omega_0$ from
  \eqref{eq:strongly-sep}. Since
  \begin{displaymath}
    \XX_0^c=\{f:\; |f(t)|\leq 1 \;, t\in[0,\infty)\}\,,
  \end{displaymath}
  we have that 
  \begin{displaymath}
    \XX_0\setminus \{f:\; \sup_{t\in \QQ} |f(t)|>1\}\subset\Omega_0\,.
  \end{displaymath}
  Similarly, 
  \begin{displaymath}
    \XX_k\setminus\{f:\; \sup_{t\in\QQ} |f(t)|\in (2^{-k},2^{-k+1}]\}
    \subset\Omega_0
  \end{displaymath}
  for all $k\geq1$. In view of the completeness assumption on the
  $\sigma$-algebra, all sets $\XX_k$, $k\geq 0$, are measurable.

  For each $k\geq0$, define the Borel map $\tau_k:\XX_k\to(0,\infty)$
  by
  \begin{displaymath}
    \tau_k(f)=\inf\{t>0:\; |f(t)|>2^{-k}\}\,,\quad  f\in \XX_k\,.
  \end{displaymath}
  Since all functions from $\DD'_0$ vanish at the origin, $\tau_k(f)$ is
  strictly positive and finite, and $\tau_k(c\circ f)=c^{-1}\tau_k(f)$ for
  all $c>0$. Let
  \begin{displaymath}
    \SS_k=\{f\in\XX_k:\; \tau_k(f)=1\}\,.
  \end{displaymath}
  Then $|f(1)|\geq 2^{-k}$ for all $f\in\SS_k$, $k\geq0$, and each
  function $g\in\XX_k$ can be uniquely represented as $s\circ f$ for
  $f\in\SS_k$ and $s>0$. The maps $(f,s)\mapsto s\circ f$ and
  $g\mapsto (\tau_k(g)\circ g,\tau_k(g)^{-1})$ are mutually inverse Borel
  bijections between $\SS_k\times(0,\infty)$ and $\XX_k$. This is seen
  by restricting $f$ onto a countable dense set $\QQ$ in
  $[0,\infty)$ and applying Lemma~\ref{lemma:measurab}. 

  Since $f\in\SS_k$ is right-continuous,
  \begin{displaymath}
    \Delta_k(f)=\sup\{t\in \QQ:\;
    |f(s)|>2^{-k-1}\;\text{for all}\; s\in [1,1+t]\}
  \end{displaymath}
  is strictly positive and measurable for each $k\geq0$. Define
  \begin{align*}
    \SS_{k0}&=\{f\in\SS_k:\; \Delta_k(f)>1\}\,,\\
    \SS_{kj}&=\{f\in\SS_k:\; \Delta_k(f)\in (2^{-j},2^{-j+1}]\}\,,\quad j\geq1\,.
  \end{align*}
  Then $\SS_k$ is the disjoint union of $\SS_{kj}$ for $j\geq0$ and
  $\XX_k$ is the disjoint union of 
  \begin{displaymath}
    \XX_{kj}=\{s\circ f:\; f\in\SS_{kj},\; s>0\}\,,\quad j\geq0\,.
  \end{displaymath}
  
  Fix any $k,j\geq0$. Then
  \begin{align*}
    q_{kj}&=Q(\{s\circ f:\; f\in\SS_{kj},\; s\in [1,1+2^{-j}]\})\\
    &\leq Q(\{f\in \DD'_0:\; |f(1)|\geq 2^{-k-1}\})\\ 
    &\leq 2^{2k+2}\int_{\{f:\; |f(1)|\geq 2^{-k-1}\}} \min(1,f(1)^2)Q(df)\\
    &\leq 2^{2k+2}\int \min(1,f(1)^2)Q(df)<\infty\,.
  \end{align*}
  By \eqref{eq:q-scales}, 
  \begin{align*}
    Q(\{s\circ f:\; & f\in\SS_{kj},\; s\geq 1\})\\
    &\leq \sum_{i=0}^\infty 
    Q(\{s\circ f:\; f\in\SS_{kj},\; s\in[(1+2^{-j})^i,\,
    (1+2^{-j})^{i+1}]\})\\
    &=\sum_{i=0}^\infty (1+2^{-j})^{-i} q_{kj}<\infty\,.    
  \end{align*}
  Thus, $Q$ restricted onto $\XX_{kj}$ is a push-forward under the map
  $(f,s)\to s\circ f$ of the product $\eta_{kj}\otimes\theta$ of a
  finite measure $\eta_{kj}$ supported by $\SS_{kj}$ and the measure
  $\theta$ on $(0,\infty)$ with density $s^{-2}ds$. Let $c_{kj}$ be
  some positive number, then the measure $\sigma_{kj}$ defined on
  $\DD'_0$ by $\sigma_{kj}(B)=c_{kj}\eta_{kj}(c_{kj}^{-1}\circ B)$
  assigns all its mass to the set $c_{kj}\circ \SS_{kj}$. Then the
  push-forward of $\sigma_{kj}\otimes\theta$ under the map
  $(f,s)\to s\circ f$ is $Q$ restricted on $\XX_{kj}$ and the total
  mass of $\sigma_{kj}$ equals $c_{kj}\eta_{kj}(\SS_{kj})$. By
  choosing $c_{kj}$ appropriately, it is always possible to achieve
  that $\sigma=\sum_{k,j\geq0} \sigma_{kj}$ is a probability measure
  on $\DD'_0$. Combining the push-forward representations of $Q$
  restricted to $\XX_{kj}$, $k,j\geq0$, we see that $Q$ is the
  push-forward of $\sigma\otimes\theta$ and so \eqref{eq:q-lp}
  holds. Given \eqref{eq:q-lp}, \eqref{eq:5} is equivalent to
  \eqref{eq:mu-condition}.

  The equivalence of (ii) and (iii) is immediate by choosing $\eps$ to
  be \iid with distribution $\sigma$ and noticing that \eqref{eq:5} is
  equivalent to \eqref{eq:eps-2} and that the limit in
  \eqref{eq:lepage-compensated} corresponds to the limit in
  \eqref{eq:compensated-sum}. 
\end{proof}

\begin{rem}
  There are many probability measure $\sigma$ that satisfy
  \eqref{eq:q-lp}, and so the distribution of $\eps$ in
  \eqref{eq:lepage-compensated} is not unique. For example, it is
  possible to scale the arguments of $\{\eps_i,i\geq1\}$ by a sequence
  of \iid positive random variables of mean one. The measure $\sigma$
  constructed in the proof of Theorem~\ref{thr:lp-stab} is supported
  by the set $\SS\subset\DD'$ such that if $f$ and $c\circ f$ with
  $c>0$ both belong to $\SS$, then $c=1$. The distribution of $\eps$
  is unique if $\eps$ is supported by a measurable set
  $\SS'\subset\DD'_0$ such that each $f\in\DD'_0$ can be uniquely
  represented as $c\circ g$ for $c>0$ and $g\in\SS'$. 
\end{rem}

\begin{rem}
  It follows from \cite[Th.~3.1]{bas:ros13} that the LePage
  series \eqref{eq:lepage-compensated} converges uniformly for $t$
  from any compact subset of $(0,\infty)$. If $H(t,r,V)=\eps(t/r)$,
  then Condition~(3.3) of \cite{bas:ros13} becomes
  \begin{displaymath}
    \int_0^\infty \P\{(\eps(t_1/r),\dots,\eps(t_k/r))\in B\}dr
    =Q(\{f:\;(f(t_1),\dots,f(t_k))\in B\})
  \end{displaymath}
  for all Borel $B$ in $\R^k\setminus\{0\}$, $t_1,\dots,t_k$, and
  $k\geq1$.
\end{rem}

\begin{theorem}
  \label{th:lepage-conv}
  A stochastically continuous stochastic process $\xi$ is time-stable
  without a Gaussian part and with the L\'evy measure $Q$ supported by
  $\DD'$ and satisfying \eqref{eq:mu-condition-1} if and only if
  \begin{equation}
    \label{eq:lepage}
    \xi(t)\eqd ct+\sum_{i=1}^\infty \eps_i(\Gamma_i^{-1}t)\,,\quad t\geq 0\,,
  \end{equation}
  where the series converges almost surely, $c$ is a constant,
  $\{\eps_i,i\geq1\}$ is a sequence of \iid stochastic processes with
  realisations in $\DD'_0$ such that
  \begin{equation}
    \label{eq:eps-1}
    \E \int_0^\infty \min(1,|\eps(t)|)t^{-2}dt<\infty\,,
  \end{equation}
  and $\{\Gamma_i, i\geq1\}$ is the sequence of successive points of
  the homogeneous unit intensity Poisson process on $[0,\infty)$.
\end{theorem}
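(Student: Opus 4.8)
The plan is to deduce Theorem~\ref{th:lepage-conv} from Theorem~\ref{thr:lp-stab} by showing that the extra integrability hypothesis \eqref{eq:mu-condition-1} on the L\'evy measure is exactly what removes the compensation in the LePage series \eqref{eq:lepage-compensated}, turning it into the uncompensated series \eqref{eq:lepage}. Throughout I would use the equivalence established in Theorem~\ref{thr:lp-stab}: time-stability without a Gaussian part and with $Q$ supported by $\DD'$ is equivalent to the representation \eqref{eq:q-lp} of $Q$ via a probability measure $\sigma$ on $\DD'_0$, hence to the LePage representation with $\eps \sim \sigma$.

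First I would record that, under \eqref{eq:mu-condition-1}, the general compensated spectral integral \eqref{eq:compensated-sum} reduces to the absolutely convergent sum \eqref{eq:4}, as already noted in the excerpt right after \eqref{eq:mu-condition-1}. So the only point is to transfer this simplification to the LePage side. Substituting $Q(df) = \sigma\otimes\theta$ into \eqref{eq:mu-condition-1} via the push-forward $(f,s)\mapsto s\circ f$ and the change of variables $t \mapsto s^{-1}t$ shows that \eqref{eq:mu-condition-1} is equivalent to
\begin{displaymath}
  \int_{\DD'_0}\int_0^\infty \min(1,|f(t)|)t^{-2}\,dt\,\sigma(df)<\infty\,,
\end{displaymath}
which is precisely \eqref{eq:eps-1} when $\eps\sim\sigma$. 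This is the same bookkeeping that in the proof of Theorem~\ref{thr:lp-stab} turned \eqref{eq:5} into \eqref{eq:eps-2}; here the quadratic truncation $\min(1,f(t)^2)$ is replaced by the linear one $\min(1,|f(t)|)$. Given this equivalence, the compensator $\E\int_0^\infty L(\eps(s^{-1}t))\one_{|\eps(s^{-1}t)|>r}\,ds$ in \eqref{eq:lepage-compensated} is, for each fixed $t>0$, an integral of a bounded function over a set of finite measure, and as $r\downarrow 0$ it converges to $\E\int_0^\infty L(\eps(s^{-1}t))\,ds$, which is finite; one then checks that this quantity can be absorbed into the linear term, changing only the constant $c$, and that the remaining sum $\sum_i \eps_i(\Gamma_i^{-1}t)$ converges absolutely almost surely by comparison of its mean absolute increments with \eqref{eq:eps-1} (the points $\Gamma_i$ grow linearly, so $\sum_i \E\min(1,|\eps_i(\Gamma_i^{-1}t)|)$ is comparable to the integral in \eqref{eq:eps-1}). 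This yields \eqref{eq:lepage} from (i), and conversely any process of the form \eqref{eq:lepage} with $\eps$ satisfying \eqref{eq:eps-1} has L\'evy measure of the form \eqref{eq:q-lp}, hence is time-stable by Theorem~\ref{thr:lp-stab}, and \eqref{eq:eps-1} translates back to \eqref{eq:mu-condition-1}.

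The main obstacle I anticipate is not the one-dimensional computation but the justification that the \emph{processes} in \eqref{eq:lepage-compensated} and \eqref{eq:lepage} agree as stochastic processes, i.e.\ in all finite-dimensional distributions and with an almost surely convergent series simultaneously for all $t$: one must show the almost sure existence of the limit and the interchange of the limit with the (a.s.\ absolutely convergent) sum is legitimate, uniformly enough to pass from fixed-$t$ statements to the whole path. Here I would lean on the convergence results already invoked in the remark following Theorem~\ref{thr:lp-stab}, namely \cite[Th.~3.1]{bas:ros13}, which gives almost sure uniform convergence of the LePage series on compact subsets of $(0,\infty)$ under exactly the intensity-matching condition displayed there; \eqref{eq:eps-1} is the integrated form of that condition with the linear truncation, so the cited theorem applies directly and both simplifies the compensator away and delivers the requisite a.s.\ convergence. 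The residual routine steps — verifying measurability of $\eps$, that $\Gamma_i^{-1}t$ is well defined for $i\geq1$, and that the constant shift is the only effect of dropping the compensator — are handled exactly as in the proof of Theorem~\ref{thr:lp-stab}.
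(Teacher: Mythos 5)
Your proposal is correct and takes essentially the same route as the paper, whose entire proof is the observation that \eqref{eq:eps-1} is equivalent to \eqref{eq:mu-condition-1} (via the polar decomposition \eqref{eq:q-lp} and the change of variables $v=s^{-1}t$), so that Theorem~\ref{thr:lp-stab} applies with the compensator reduced to a deterministic linear term absorbed into $ct$. Your additional verifications (finiteness and linearity of the limiting compensator, a.s.\ absolute convergence via Campbell's formula) are just the details the paper leaves implicit.
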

\begin{proof}
  It suffices to note that \eqref{eq:eps-1} is equivalent to
  \eqref{eq:mu-condition-1}.
\end{proof}

\begin{rem}
  \label{rem:finite-int}
  Condition \eqref{eq:eps-1} (respectively \eqref{eq:eps-2}) holds if
  $\int_0^1\E|\eps(t)|t^{-2}dt<\infty$ (respectively
  $\int_0^1\E(\eps(t)^2)t^{-2}dt<\infty$). 
\end{rem}

\begin{rem}
  The time-stable process given by \eqref{eq:lepage} has the cumulant
  \begin{displaymath}
    \Psi(\lambda)=-\imath\lambda c
    + \int_0^\infty \E(1-e^{\imath \lambda \eps(s)})s^{-2} ds\,.
  \end{displaymath}
\end{rem}

\begin{corollary}
  \label{cor:non-neg}
  Each \as non-negative \cadlag time-stable process admits the LePage
  representation \eqref{eq:lepage}.
\end{corollary}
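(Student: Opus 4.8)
The plan is to show that an \as non-negative \cadlag time-stable process $\xi$ meets every hypothesis of Theorem~\ref{th:lepage-conv}, after which the representation \eqref{eq:lepage} is immediate. Concretely, I would verify three points: that $\xi$ has no Gaussian part, that its L\'evy measure $Q$ is supported by $\DD'$, and that $Q$ satisfies \eqref{eq:mu-condition-1}.

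First I would rule out a Gaussian component, which is the only step that is not a direct citation of the preceding results. Fix $t>0$. Since $\xi$ is time-stable it is infinitely divisible, so $\xi(t)$ is an infinitely divisible random variable; being \as non-negative, its L\'evy--Khintchine triplet carries no Gaussian summand, because convolution with a non-degenerate Gaussian factor would give $\xi(t)$ full support on $\R$ and contradict $\xi(t)\geq0$ \as (this is \cite[Th.~51.1]{KS}). In the decomposition of Theorem~\ref{thr:rep} the Gaussian coefficient of $\xi(t)$ equals the variance $C(t,t)$ of the centred Gaussian process appearing there, so $C(t,t)=0$ for all $t\geq0$ and that Gaussian process vanishes identically; hence $\xi$ is time-stable without a Gaussian part.

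The two remaining conditions are inherited from earlier results. By the same fact \cite[Th.~51.1]{KS} recalled just before \eqref{eq:4}, the \as non-negativity of $\xi(t)$ for every $t\geq0$ forces the strengthened integrability \eqref{eq:mu-condition-1} of $Q$, and then Proposition~\ref{prop:d} shows that $Q$ is supported by \cadlag functions, that is by $\DD'$. (In the degenerate case $Q=0$ the process is deterministic and, by \eqref{eq:ts-def} together with $\xi(0)=0$ \as and non-negativity, equals $\xi(t)=ct$ for some $c\geq0$, which is the trivial instance of \eqref{eq:lepage} with an empty jump part.) With all hypotheses of Theorem~\ref{th:lepage-conv} verified, $\xi$ admits the LePage representation \eqref{eq:lepage}. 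The only delicate step is the elimination of the Gaussian component; everything else is bookkeeping on top of Theorem~\ref{th:lepage-conv} and Proposition~\ref{prop:d}.
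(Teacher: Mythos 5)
Your proof is correct and follows exactly the route the paper intends: the corollary is stated without proof precisely because it is the combination of the non-negativity facts (no Gaussian part, condition \eqref{eq:mu-condition-1} via \cite[Th.~51.1]{KS}) with Proposition~\ref{prop:d} and Theorem~\ref{th:lepage-conv}. Your explicit elimination of the Gaussian component via the one-dimensional marginals and Theorem~\ref{thr:rep} fills in the one step the paper leaves implicit, and it is sound.
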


\begin{rem}
  \label{rem:rd}
  Analogues of the above results hold for time-stable processes with
  values in $\R^d$.  This can be shown by replacing $\SS_{kj}$ from
  the proof of Theorem~\ref{thr:lp-stab} with the Cartesian product of
  $d$-tuples of such sets
  $\SS^1_{k_1j_1}\times\cdots\times\SS^d_{k_dj_d}$, $k_i,j_i\geq 0$,
  $i=1,\dots,d$, constructed for each of the coordinates of the
  process. In particular, Corollary~\ref{cor:non-neg} applies for
  time-stable processes with values in $\R_+^d$.
\end{rem}

\begin{example}[L\'evy processes]
  \label{ex:levy-step}
  The spectral representation of a L\'evy process $\xi$ without a
  Gaussian part can be obtained by setting $f_i(t)=m_i\one_{t\geq
    \tau_i}$, where $\{(\tau_i,m_i),\; i\geq1\}$ is a marked Poisson
  process on $(0,\infty)\times (\R\setminus\{0\})$ with intensity
  measure being the product of the Lebesgue measure and a L\'evy
  measure $\Lambda$ on $\R\setminus\{0\}$. Indeed, then
  \begin{displaymath}
    \xi(t)\eqd ct + \lim_{r\downarrow0} 
    \left[\sum_{|x_i|>r} x_i\one_{t_i\leq t}-t\int_{|x|>r} 
      L(x)\Lambda(dx)\right]\,,
  \end{displaymath}
  which is the classical decomposition of a L\'evy process. In view of
  the uniqueness of the minimal spectral representation, the L\'evy
  measure $Q$ is supported by step functions of the type
  $m\one_{t\geq \tau}$.
  By Theorem~\ref{thr:lp-stab}, $\xi$ admits the series
  decomposition \eqref{eq:lepage-compensated} with
  $\eps(t)=\eta\one_{t\zeta\geq 1}$, where \eqref{eq:eps-2}
  corresponds to $\E[\min(1,\eta^2)\zeta]<\infty$. Following the
  construction from the proof of Theorem~\ref{thr:lp-stab}, the joint
  distribution of $(\eta,\zeta)$ can be constructed as follows.
  Denote $B_0=\{x\in\R:\; |x|>1\}$ and $B_k=\{x\in\R:\; 2^{-k}<|x|\leq
  2^{-k+1}\}$, $k\geq1$, let $q_k=\Lambda(B_k)$, $k\geq0$, and choose
  strictly positive $\{c_k,k\geq0\}$ such that $\sum_{k=0}^\infty
  c_kq_k=1$.  Then, for every Borel $A\subset \R\setminus\{0\}$, 
  \begin{displaymath}
    \P\{\eta\in A,\zeta=c_k^{-1}\}=\Lambda(A\cap
    B_k)c_k\quad \text{for every } k \geq 0.
  \end{displaymath}
  It is easy to see that
  \begin{displaymath}
    \E[\min(1,\eta^2)\zeta]=\int_{\R\setminus\{0\}}\min(1,x^2)\Lambda(dx).
  \end{displaymath}
  If $\xi$ has bounded variation, then Theorem~\ref{th:lepage-conv}
  applies and
  \begin{displaymath}
    \xi(t)\eqd ct+\sum_{i=1}^\infty \eta_i\Ind_{t\zeta_i\geq \Gamma_i}
  \end{displaymath}
  provides a LePage representation of $\xi$, cf.  \cite{ros01l} for
  the LePage representation of L\'evy processes on $[0,1]$.  The
  choice of $\eps(t)=\eta \one_{t \geq 1}$, $t\geq0$, yields the
  compound Poisson process $\xi(t)$, which becomes the standard
  Poisson process if $\eta=1$ \as 
  
  Note that the time and the size of the jump of $\eps$ may be
  dependent. For instance, let $\eps(t)=\eta\one_{t\geq\eta}$ for a
  positive random variable $\eta$. This random function always
  satisfies \eqref{eq:eps-1} and yields the L\'evy process
  \begin{displaymath}
    \xi(t)=\sum_{i=1}^\infty \eta_i\one_{t\geq \Gamma_i\eta_i}
  \end{displaymath}
  with the cumulant
  $\Psi(\lambda)=\E[(1-e^{\imath\lambda\eta})\eta^{-1}]$.
\end{example}

\begin{example}
  If $\eps(t)=\eta t^{1/\alpha}$, where $\alpha\in(0,2)$ and $\eta$ is
  a symmetric random variable with $\E|\eta|^\alpha<\infty$, then the
  LePage series \eqref{eq:lepage} converges \as by
  \cite[Th.~1.4.2]{ST} and $\xi(t)=ct + \zeta t^{1/\alpha}$ for a
  symmetric $\alpha$-stable random variable $\zeta$, see
  Example~\ref{stastex2}. In case $\alpha<1$, the symmetry of $\eta$
  is not required for the convergence of the LePage series and $\zeta$
  is strictly $\alpha$-stable by \cite[Th.~1.4.5]{ST}.
\end{example}

\begin{example}
  Choosing $\eps$ to be a stochastic process with stationary
  increments yields examples of time-stable processes with stationary
  increments which are not L\'evy processes. For instance, let $\eps$
  be the fractional Brownian motion with the Hurst parameter
  $H\in(\thf,1)$. Then \eqref{eq:eps-2} holds, since
  \begin{displaymath}
    \E\int_0^1 \min(1,\eps(t)^2)t^{-2}dt\leq 
    \int_0^1 \E\eps(t)^2 t^{-2}dt=\int_0^1 t^{2H-2}dt<\infty\,.
  \end{displaymath}
\end{example}

\begin{example}
  Let $\eps(t)=\xi(t^{1/\alpha})$, $t\geq0$, for $\alpha\in(0,1)$ and
  a time-stable process $\xi$ such that $\E|\xi(1)|<\infty$, so that
  \eqref{eq:eps-1} holds, since
  \begin{displaymath}
    \int_0^1 \E|\xi(t^{1/\alpha})|t^{-2}dt
    =  \E|\xi(1)|\int_0^1 t^{1/\alpha-2}dt<\infty.
  \end{displaymath}
  By conditioning on $\{\Gamma_i\}$ and using
  Proposition~\ref{prop:equiv-ts}, one obtains that
  \begin{displaymath}
    \sum_{i=1}^\infty \eps_i(\Gamma_i^{-1}t)
    =\sum_{i=1}^\infty \xi_i(\Gamma_i^{-1/\alpha} t^{1/\alpha})
    \eqd \xi(t^{1/\alpha}\sum_{i=1}^\infty \Gamma_i^{-1/\alpha})
    =\xi(t^{1/\alpha}\zeta)
  \end{displaymath}
  for a strictly $\alpha$-stable non-negative random variable $\zeta$
  independent of $\xi$. In this case the LePage series yields a
  representation of a sub-stable process from
  Example~\ref{ex:sub-stable}.
\end{example}

\begin{example}
  Let $\xi$ be a non-decreasing time-stable process that admits the
  LePage representation \eqref{eq:lepage} with $c=0$. If
  $\{X_i,i\geq1\}$ are \iid copies of a L\'evy process $X$ independent
  of $\xi$, then
  \begin{displaymath}
    \sum_{i=1}^\infty X_i(\eps_i(\Gamma_i^{-1}t))
  \end{displaymath}
  is the LePage representation of the time-stable process $X(\xi(t))$
  from Example~\ref{ex:subord}. This is seen by conditioning upon
  $\eps_i$ and $\Gamma_i$, $i\geq1$, and noticing that $X$ is
  stochastically continuous.
\end{example}

\begin{example}
  Consider $\eps(t)=(t^\beta-1)_+$, \ie the positive part of
  $(t^\beta-1)$, where $\beta\geq1$. If $\beta=1$, then the graph of
  $\xi$ is a continuous convex broken line with vertices at $(0,0)$
  and at
  \begin{displaymath}
    (\Gamma_n,\Gamma_n\sum_{i=1}^n \Gamma_i^{-1}-n)\,,\quad n\geq1\,.
  \end{displaymath}
  In order to obtain a differentiable curve, it is possible to use
  $\eps(t)=(t-1)_+^\beta$ for $\beta>1$.
\end{example}

\section{Time-stable step functions}
\label{sec:pure-jump-time}

Assume that $\xi$ is a \emph{pure jump} time-stable process, \ie its
paths are \cadlag piecewise constant functions with finitely many
jumps in each bounded Borel subset of $[0,\infty)$ and \as vanishing
at zero. In view of the assumed stochastic continuity and
\cite[Lemma~1.6.2]{sil04}, the jump times of $\xi$ have non-atomic
distributions.  The pure jump part of any \cadlag
time-stable process is also time-stable. This applies to the process
of jumps larger than $\delta>0$ in absolute value by noticing that the
jump part of the sum of two independent stochastic processes with
non-atomic distribution of jump times is equal to the sum of their jump
parts.

Let $\sM((0,\infty)\times\R)$ denote the family of marked point
configurations on $(0,\infty)$ with marks from $\R$. A \emph{marked
  point process} is a random element in $\sM((0,\infty)\times\R)$, see
\cite[Sec.~6.4]{DVJ1}. The successive ordered jump times
$\{\tau_k\}$ and the jump heights $\{m_k\}$ of a pure jump time-stable
process $\xi$ form the marked point process
$M=\{(\tau_k,m_k), k\geq1\}$, so that
\begin{displaymath}
  \xi(t)=\sum_{\tau_k\leq t}m_k\,,\qquad t\geq0.
\end{displaymath}
The sum is finite for every $t$, since the process is
assumed to have only a finite number of jumps in any bounded
interval. This construction introduces a correspondence between pure-jump
processes and marked point processes. Note that $M$ is a random closed
(and locally finite) set in $(0,\infty)\times\R$, see \cite{mo1}. If
convenient, we treat $M$ as a random counting measure, so that
$M(A\times B)$ equals the number of pairs $(\tau_k,m_k)$ that lie in
$A\times B$ for Borel $A\subset(0,\infty)$ and $B\subset\R$. Note that
$M(A\times\R)<\infty$ for all bounded Borel $A\subset(0,\infty)$.  The
process $\xi$ is compound Poisson if and only if $M$ is an
independently marked homogeneous Poisson process, \ie the jump times
form a homogenous Poisson process on $(0,\infty)$, while the jump
sizes are \iid random variables independent of the jump times.

\begin{proposition}
  \label{prop:fj}
  The time of the first jump of a \cadlag time-stable process, as well
  as the time of the first jump of size at least a given $\delta>0$
  have an exponential distribution.
\end{proposition}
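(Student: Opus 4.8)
The plan is to exploit the time-stability relation in the form given by Proposition~\ref{prop:equiv-ts} to derive a functional equation for the survival function of the first jump time, and then invoke the stochastic continuity to conclude that this survival function is an exponential. Write $T$ for the time of the first jump of $\xi$ (the argument for the first jump of size $\geq\delta$ is identical, after replacing $\xi$ by its time-stable pure-jump sub-process consisting of jumps of absolute size at least $\delta$, which is time-stable by the remarks opening Section~\ref{sec:pure-jump-time}). Set $G(t)=\P\{T>t\}$ for $t\geq0$. Because $\xi$ is \cadlag, piecewise constant near the origin, and \as vanishes at $0$, the event $\{T>t\}$ coincides up to a null set with $\{\xi(s)=0\text{ for all }s\in[0,t]\}$, and by right-continuity this equals $\{\xi(s)=0\text{ for all }s\in[0,t]\cap\QQ\}$ for a fixed countable dense set $\QQ$; hence $G$ is measurable and, being a survival function, is non-increasing with $G(0)=1$.

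The key step is the multiplicativity of $G$. Fix $a,b>0$ and let $\xi_1,\xi_2$ be independent copies of $\xi$. By Proposition~\ref{prop:equiv-ts}, $(a+b)\circ\xi \eqd a\circ\xi_1 + b\circ\xi_2$, so the first jump time of $(a+b)\circ\xi$ has the same law as the first jump time of $a\circ\xi_1+b\circ\xi_2$. On one hand, the first jump of $(a+b)\circ\xi$ occurs at time $T/(a+b)$, so its survival function at $1$ is $\P\{T/(a+b)>1\}=G(a+b)$. On the other hand, using that the jump times of $\xi$ have non-atomic distributions (by \cite[Lemma~1.6.2]{sil04}, as noted in the text), the jump set of the sum $a\circ\xi_1+b\circ\xi_2$ is \as the union of the jump sets of the two summands, with no cancellation; therefore the first jump of the sum occurs at $\min\bigl(T_1/a,\,T_2/b\bigr)$ where $T_1,T_2$ are independent copies of $T$. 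Thus
\begin{displaymath}
  G(a+b)=\P\{T_1/a>1\}\,\P\{T_2/b>1\}=G(a)\,G(b)\,,\qquad a,b>0\,.
\end{displaymath}
A measurable (or monotone) solution of the Cauchy equation $G(a+b)=G(a)G(b)$ on $(0,\infty)$ with values in $[0,1]$ is necessarily of the form $G(t)=e^{-\lambda t}$ for some $\lambda\in[0,\infty]$ (with the conventions $e^{-0\cdot t}=1$, $e^{-\infty\cdot t}=0$ for $t>0$); see, e.g., \cite[Th.~1.1.7]{bin:gol:teu87}. The degenerate cases $\lambda=0$ (no jump ever, so $\xi\equiv0$) and $\lambda=\infty$ (an immediate jump, contradicting $\xi(0)=0$ \as and stochastic continuity, or contradicting the pure-jump assumption of finitely many jumps in bounded sets) are excluded for a non-degenerate pure-jump $\xi$, so $\lambda\in(0,\infty)$ and $T$ is exponentially distributed with parameter $\lambda$.

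The main obstacle is the justification that no jump cancellation occurs when forming the sum $a\circ\xi_1+b\circ\xi_2$, i.e. that the first jump time of the sum is exactly $\min(T_1/a,T_2/b)$ rather than possibly later. This is where the non-atomicity of the jump-time distributions is essential: since $T_1/a$ and $T_2/b$ have non-atomic laws and are independent, $\P\{T_1/a=T_2/b\}=0$, so at the smaller of the two times exactly one summand jumps and the sum genuinely jumps there. One should also check the measurability issues glossed over above — that $T$ is a bona fide random variable and that $\{\xi\equiv0\text{ on }[0,t]\}$ is measurable — which follows from the \cadlag property and separability exactly as in \eqref{eq:strongly-sep}. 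The remaining steps (multiplicativity $\Rightarrow$ exponential, exclusion of the degenerate cases) are routine.
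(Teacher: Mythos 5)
Your proof is correct and follows essentially the same route as the paper: the first jump time of the sum of independent copies equals the minimum of the individual first jump times, and the stability relation then forces the survival function to be that of an exponential law (the paper phrases this as the min-stability characterisation, $n^{-1}\tau\eqd\min(\tau_1,\dots,\tau_n)$, while you use the equivalent Cauchy equation $G(a+b)=G(a)G(b)$ obtained from Proposition~\ref{prop:equiv-ts}). Your explicit justification of the no-cancellation step via the non-atomicity of the jump-time distributions is a point the paper's one-line proof leaves implicit.
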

\begin{proof}
  Observe that the time of the first jump of the sum of $n$
  independent processes equals the minimum of the first jump times
  $\tau_1,\dots,\tau_n$ of all summands. Then \eqref{eq:ts-def} yields
  that $n^{-1}\tau$ has the same distribution as the minimum of $n$
  \iid copies of $\tau$ and so characterises the exponential
  distribution. The same argument applies to the first jump of size at
  least $\delta>0$.
\end{proof}

The time of the second jump is not necessarily distributed
as the sum of two independent exponential random variables, since the
times between jumps may be dependent and even the gap between the
first and the second jump is no longer exponentially distributed in
general. 

\begin{example}
  \label{ex:subordination-jump}
  Let $N$ be the standard Poisson process, and let $\xi$ be an
  independent increasing time-stable process. Then $N(\xi(t))$ is a
  pure jump time-stable process, see Example~\ref{ex:subord}. In
  particular, the time $\tau$ of its first jump is exponentially
  distributed and $\tau=\inf\{t:\; \xi(t)\geq \zeta\}$ for the
  exponentially distributed random variable $\zeta$ independent of
  $\xi$.  
\end{example}

Scaling the argument of a pure jump process $\xi$ can be rephrased in
terms of scaling the marked point process $M$ corresponding to $\xi$, so
that $a\circ\xi$ corresponds to the marked point process
\begin{displaymath}
  a^{-1}\circ M=\{(a^{-1}\tau_k,m_k):\; k\geq1\})\,.
\end{displaymath}
The sum of independent pure jump processes corresponds to the
superposition of the corresponding marked point processes. The next
result relates the time stability property to the union-stability of
random sets (see \cite[Sec.~4.1.3]{mo1}); it immediately follows from
\eqref{eq:ts-def}.

\begin{proposition}
  \label{prop:ts-mp}
  A stochastically continuous pure jump process $\xi$ is time-stable
  if and only if its corresponding marked point process $M$ is a
  union-stable random closed set in the sense that
  \begin{equation}
    \label{eq:un-stab}
    M_1 \cup \cdots\cup M_n \eqd n^{-1}\circ M
  \end{equation}
  for each $n\geq2$, where $M_1,\dots,M_n$ are independent copies
  of $M$.
\end{proposition}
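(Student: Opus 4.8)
The plan is to unwind the definitions on both sides and check that the two conditions are literal translations of each other. The correspondence $\xi \leftrightarrow M$ sends a pure jump process to the marked point configuration of its jump times and heights, and this map is a measurable bijection between pure jump \cadlag paths (vanishing at $0$, finitely many jumps in bounded sets) and locally finite marked point configurations on $(0,\infty)\times\R$. Under this bijection, the two building blocks of \eqref{eq:ts-def} translate as follows: the sum $\xi_1+\cdots+\xi_n$ of independent pure jump processes corresponds to the superposition $M_1\cup\cdots\cup M_n$ of the independent copies $M_i$ (this uses that the jump times have non-atomic distributions, so almost surely no two summands jump at the same time and heights are not lost to cancellation — exactly the point already recorded in the paragraph preceding Proposition~\ref{prop:fj}); and the time-scaled process $n\circ\xi$, with $(n\circ\xi)(t)=\xi(nt)$, corresponds to the point process $n^{-1}\circ M=\{(n^{-1}\tau_k,m_k):k\geq1\}$, since a jump of $\xi$ at time $\tau$ becomes a jump of $\xi(n\,\cdot)$ at time $\tau/n$ with unchanged height, as noted just before the statement.

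First I would make the reduction to a finite-dimensional statement precise. Equality in distribution of the processes in \eqref{eq:ts-def} means equality of all finite-dimensional distributions, but for pure jump processes this is equivalent to equality in distribution as random elements of the Skorokhod-type path space (since the finite-dimensional distributions on a dense set determine the law of a \cadlag process via the separability / measurable-modification facts invoked in Section~\ref{sec:spectr-repr}, and a pure jump path is recovered from its values), which is in turn equivalent to equality in distribution of the associated marked point processes as random elements of $\sM((0,\infty)\times\R)$ — equivalently as random closed sets, by the correspondence in \cite{mo1}. So the chain of equivalences is: \eqref{eq:ts-def} for $\xi$ $\iff$ $\xi_1+\cdots+\xi_n\eqd n\circ\xi$ as \cadlag processes $\iff$ (applying the bijection) $M_1\cup\cdots\cup M_n \eqd n^{-1}\circ M$ as random closed sets, which is precisely \eqref{eq:un-stab}.

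The implication that remains after these translations is essentially automatic in both directions, which is why the statement says it ``immediately follows from \eqref{eq:ts-def}''. For the forward direction, given time stability of $\xi$, apply the bijection to both sides of \eqref{eq:ts-def} and read off \eqref{eq:un-stab}. For the converse, given \eqref{eq:un-stab} for the marked point process $M$ of a stochastically continuous pure jump process $\xi$, apply the inverse bijection: $M_1\cup\cdots\cup M_n$ is the marked point process of $\xi_1+\cdots+\xi_n$ and $n^{-1}\circ M$ is that of $n\circ\xi$, so their equality in distribution as point processes forces $\xi_1+\cdots+\xi_n\eqd n\circ\xi$, i.e.\ \eqref{eq:ts-def}; stochastic continuity of $\xi$ was assumed, so $\xi$ is time-stable.

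I expect the only genuine subtlety — and hence the main thing to be careful about — is the identification of ``sum of independent processes'' with ``superposition of point processes'', because it relies on the non-atomicity of jump-time distributions (so that almost surely the jump-time sets of the $n$ summands are disjoint and no cancellation of marks occurs at a common time). That fact is already established in the text via \cite[Lemma~1.6.2]{sil04} together with stochastic continuity, so in the write-up I would simply cite it. Everything else — measurability of the bijection, reduction from finite-dimensional to path-space equality, the reading-off of \eqref{eq:un-stab} — is routine bookkeeping that I would state rather than belabor.
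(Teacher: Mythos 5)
Your argument is correct and is precisely the unwinding that the paper itself intends: the text gives no separate proof, asserting only that the proposition ``immediately follows from \eqref{eq:ts-def}'' once the preceding paragraphs have identified sums of independent pure jump processes with superpositions of their marked point processes (via the non-atomicity of jump times from \cite[Lemma~1.6.2]{sil04}) and time scaling $a\circ\xi$ with the scaling $a^{-1}\circ M$. You have simply made explicit the bookkeeping --- the bijection between paths and configurations and the passage from finite-dimensional to path-space equality --- that the authors leave implicit, and you correctly isolate the one genuine point of care.
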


\begin{corollary}
  \label{cor:jump-plus}
  A stochastically continuous pure jump process $\xi$ is time-stable
  if and only if $\xi=\xi_+-\xi_-$ for the pair of stochastically
  continuous pure jump processes $(\xi_+,\xi_-)$ that form a pure jump
  time-stable process with values in $\R_+^2$.
\end{corollary}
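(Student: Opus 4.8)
The plan is to prove the two implications separately; all the substance lies in the ``only if'' direction, the converse being essentially immediate from the fact that linear images of time-stable processes are again time-stable. For the ``if'' direction, suppose $\xi=\xi_+-\xi_-$, where $(\xi_+,\xi_-)$ is a pure jump time-stable process with values in $\R_+^2$. Applying the linear map $(x,y)\mapsto x-y$ to the values of the processes preserves the equality of all finite-dimensional distributions, so $\sum_{i=1}^n(\xi_+^{(i)},\xi_-^{(i)})\eqd n\circ(\xi_+,\xi_-)$ gives $\sum_{i=1}^n\xi^{(i)}\eqd n\circ\xi$. Stochastic continuity of $\xi$ is inherited from that of $\xi_+$ and $\xi_-$, and $\xi$ is a pure jump process because a difference of two non-decreasing piecewise constant \cadlag functions that vanish at the origin is again piecewise constant and \cadlag, has only finitely many jumps in any bounded interval, and vanishes at the origin. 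Hence $\xi$ is pure jump time-stable.

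For the ``only if'' direction, let $\xi$ be a pure jump time-stable process with associated marked point process $M=\{(\tau_k,m_k),\,k\geq1\}$, and split its jumps by sign:
\[
  \xi_+(t)=\sum_{\tau_k\leq t}m_k^+\,,\qquad
  \xi_-(t)=\sum_{\tau_k\leq t}m_k^-\,,\qquad t\geq0\,,
\]
with $m^\pm=\max(\pm m,0)$. Then $\xi_+$ and $\xi_-$ are non-decreasing pure jump processes, $\xi=\xi_+-\xi_-$, and $(\xi_+,\xi_-)$ takes values in $\R_+^2$ with jumps in $\R_+^2\setminus\{0\}$ located exactly at the $\tau_k$. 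Since the jump times of $\xi$ have non-atomic distributions (as recalled at the beginning of this section), $\P(t\in\{\tau_k,\,k\geq1\})=0$ for every fixed $t$, so $(\xi_+,\xi_-)$ has no fixed time of discontinuity; combined with the \cadlag property this gives $(\xi_+(s),\xi_-(s))\to(\xi_+(t),\xi_-(t))$ \as, hence in probability, as $s\to t$, so $(\xi_+,\xi_-)$ is stochastically continuous.

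It remains to check that $(\xi_+,\xi_-)$ is time-stable. Take \iid copies $\xi^{(1)},\dots,\xi^{(n)}$ of $\xi$ with sign-decompositions $(\xi_+^{(i)},\xi_-^{(i)})$. By non-atomicity of jump times, almost surely no two of the copies jump at the same instant, so the positive (respectively negative) jump part of $\xi^{(1)}+\cdots+\xi^{(n)}$ equals $\xi_+^{(1)}+\cdots+\xi_+^{(n)}$ (respectively $\xi_-^{(1)}+\cdots+\xi_-^{(n)}$); that is, the sign-decomposition of $\xi^{(1)}+\cdots+\xi^{(n)}$ coincides \as with $\sum_{i=1}^n(\xi_+^{(i)},\xi_-^{(i)})$. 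The sign-decomposition $X\mapsto(X_+,X_-)$ is a measurable functional of a \cadlag path (the jump point process, hence $(X_+,X_-)$, is recoverable from the values of $X$ on a countable dense set), and for \cadlag processes equality of all finite-dimensional distributions entails equality of the laws on the path space; since $\xi^{(1)}+\cdots+\xi^{(n)}\eqd n\circ\xi$ and $(n\circ\xi)_\pm=n\circ\xi_\pm$, this yields $\sum_{i=1}^n(\xi_+^{(i)},\xi_-^{(i)})\eqd n\circ(\xi_+,\xi_-)$, \ie $(\xi_+,\xi_-)$ is pure jump time-stable with values in $\R_+^2$. The same conclusion can be read off from Proposition~\ref{prop:ts-mp} applied in $\R^2$ as in Remark~\ref{rem:rd}: the image of the union-stable random closed set $M$ under the map fixing locations and sending each mark $m$ to $(m^+,m^-)$ is again union-stable, and it corresponds precisely to $(\xi_+,\xi_-)$.

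The step I expect to require the most care is the compatibility of sign-splitting with addition of processes used above: it is essential that jump times be non-atomic, so that jumps of independent copies almost surely do not collide and hence cannot partially cancel. This is also the reason the corollary is phrased through a single $\R_+^2$-valued process rather than through two separate one-dimensional processes.
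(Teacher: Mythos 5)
Your proposal is correct and matches the paper's argument in substance: both split the jumps by sign via $m\mapsto(m_+,m_-)$ and rely on the \as non-collision of jump times of independent copies so that this splitting commutes with superposition, the paper merely phrasing this at the level of the marked point process via the map $f(\tau,m)=(\tau,m_+,m_-)$ and Proposition~\ref{prop:ts-mp} --- which is exactly the reformulation you note at the end of your own write-up. Your more hands-on path-level verification (measurability of the sign-decomposition functional plus the fact that finite-dimensional distributions determine the law of a \cadlag process on path space) is a sound, slightly more explicit rendering of the same idea, with no gaps.
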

\begin{proof}
  For $(\tau,m)\in(0,\infty)\times\R$ let $f(\tau,m)=(\tau,m_+,m_-)$,
  with $m_+$ and $m_-$ being the positive and negative parts of
  $m\in\R$. Then $M$ satisfies \eqref{eq:un-stab} if and only if
  $f(M)$ satisfies the analogue of \eqref{eq:un-stab} with the scaling
  along the first coordinate. Finally, this property of $f(M)$ is a
  reformulation of the time-stability of $(\xi_+,\xi_-)$, where
  $\xi_+$ is the sum of all positive jumps of $\xi$ and $\xi_-$ is the
  sum of all negative jumps.
\end{proof}

\begin{theorem}
  \label{thr:pj-lp}
  A stochastically continuous pure jump process $\xi$ is time-stable
  if and only if
  \begin{equation}
    \label{eq:pj-lepage}
    \xi(t)\eqd \sum_{i=1}^\infty \eps_i(\Gamma_i^{-1}t),
  \end{equation}
  where $\{\Gamma_i\}$ form a homogeneous unit intensity Poisson point
  process on $(0,\infty)$, and $\{\eps_i,i\geq1\}$ are independent
  copies of a random step function $\eps$ which is independent of
  $\{\Gamma_i\}$ and satisfies \eqref{eq:eps-1}.
\end{theorem}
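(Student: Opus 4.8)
The plan is to derive this from Theorem~\ref{th:lepage-conv} by checking that a pure jump time-stable process automatically satisfies the hypotheses imposed there, namely that it has no Gaussian part, its deterministic linear part vanishes, and its L\'evy measure $Q$ is supported by $\DD'$ and satisfies the finiteness condition~\eqref{eq:mu-condition-1}. First I would argue that a pure jump process, having piecewise constant \cadlag paths, is of bounded variation on compact intervals, and in particular has no Gaussian component; moreover the structure of the jumps forces the drift constant $c$ in~\eqref{eq:lepage} to be zero, so that the representation reduces to the pure series $\xi(t)\eqd\sum_i\eps_i(\Gamma_i^{-1}t)$ as claimed in~\eqref{eq:pj-lepage}. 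Conversely, any process of the form~\eqref{eq:pj-lepage} with $\eps$ a random step function satisfying~\eqref{eq:eps-1} is time-stable by Theorem~\ref{th:lepage-conv}, and one checks it is pure jump because each $\eps_i(\Gamma_i^{-1}\,\cdot\,)$ is a step function with finitely many jumps and, by~\eqref{eq:eps-1} together with the $\sigma$-finiteness of the Poisson process, only finitely many of the summands contribute a jump in any bounded interval.

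The one genuinely new ingredient, and the step I expect to be the main obstacle, is showing that the L\'evy measure $Q$ of a pure jump time-stable process is supported by (left-continuous-at-infinity, \cadlag) \emph{step} functions in $\DD'_0$, and in particular that $Q$ is supported by $\DD'$ at all --- Proposition~\ref{prop:d} only covers the \as non-negative case. To handle general (signed) pure jump $\xi$ I would invoke Corollary~\ref{cor:jump-plus}: write $\xi=\xi_+-\xi_-$ where $(\xi_+,\xi_-)$ is a pure jump time-stable process with values in $\R_+^2$. By the $\R^d$-analogue of Corollary~\ref{cor:non-neg} noted in Remark~\ref{rem:rd}, the non-negative process $(\xi_+,\xi_-)$ admits a LePage representation with step-function coordinates, and its L\'evy measure is supported by non-negative \cadlag step functions on $[0,\infty)^2$. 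Projecting back via $(\tau,m_+,m_-)\mapsto(\tau,m_+-m_-)$ shows that $Q$ is supported by \cadlag step functions, hence by $\DD'_0$, and that~\eqref{eq:mu-condition-1} holds because each coordinate process is non-negative (so condition~\eqref{eq:mu-condition-1} holds for it, as recalled after~\eqref{eq:mu-condition-1}).

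With the support statement in hand, I would then verify the matching of the two descriptions of the jump structure. The marked point process $M=\{(\tau_k,m_k)\}$ of $\xi$ is, by Proposition~\ref{prop:ts-mp}, a union-stable random closed set in $(0,\infty)\times\R$; its scaling property~\eqref{eq:un-stab} translates into exactly the scaling~\eqref{eq:q-scales} of the L\'evy measure under $s\circ(\cdot)$ acting on the time coordinate. Running the disintegration from the proof of Theorem~\ref{thr:lp-stab} produces the probability measure $\sigma$ on $\DD'_0$ with $Q(B)=\int_0^\infty\sigma(t\circ B)\,dt$; because $Q$ is supported by step functions, $\sigma$ is supported by step functions too, and taking $\eps$ \iid distributed as $\sigma$ gives a random step function satisfying~\eqref{eq:eps-1} (equivalently~\eqref{eq:mu-condition-1} via~\eqref{eq:q-lp}). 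Theorem~\ref{th:lepage-conv} then yields~\eqref{eq:pj-lepage} with $c=0$.

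A subsidiary point to dispatch carefully is the vanishing of the drift $c$. One way is to note that for a pure jump process the paths are constant between jumps, so $\xi$ has no linear component; alternatively, compare the finite-dimensional distributions at small $t$, using that $\xi(t)\to0$ in probability as $t\downarrow0$ with the probability of a jump before time $t$ tending to zero, which rules out a nonzero $ct$ term. Either way the argument is routine once the representation~\eqref{eq:lepage} with a possibly nonzero $c$ has been established, and it completes the proof of the equivalence.
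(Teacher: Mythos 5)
Your overall route for the necessity direction coincides with the paper's: both reduce to the non-negative case by writing $\xi=\xi_+-\xi_-$ via Corollary~\ref{cor:jump-plus} and then invoke the $\R_+^2$-valued LePage representation from Remark~\ref{rem:rd}. The gap sits exactly at the step you yourself flag as the main obstacle: you need the L\'evy measure of $(\xi_+,\xi_-)$ to be supported by \emph{step} functions, since otherwise the resulting $\eps',\eps''$ are merely \cadlag and \eqref{eq:pj-lepage} with a random \emph{step} function $\eps$ does not follow. Your justification --- that ``by the $\R^d$-analogue of Corollary~\ref{cor:non-neg} \dots\ $(\xi_+,\xi_-)$ admits a LePage representation with step-function coordinates'' --- does not deliver this. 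Corollary~\ref{cor:non-neg} rests on Proposition~\ref{prop:d}, which shows only that the L\'evy measure of a non-negative \cadlag time-stable process is supported by \cadlag functions; nothing in that chain forces the support to consist of piecewise constant functions. Your later sentence ``because $Q$ is supported by step functions, $\sigma$ is supported by step functions too'' then presupposes precisely what remains to be proved.

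The paper supplies the missing ingredient by a different device: it observes that the marked point process $f(M)$ on $(0,\infty)\times\R_+^2$ is itself an infinitely divisible point process and invokes its KLM (cluster) representation \cite[Th.~10.2.V]{DVJ2}, so that $f(M)$ is the superposition of the atoms of a Poisson process on $\sM((0,\infty)\times\R_+^2)$ with intensity measure $\tilde{Q}$. Each point configuration in the support of $\tilde{Q}$ corresponds to a pure jump function, and the push-forward of $\tilde{Q}$ under this correspondence is the L\'evy measure of $(\xi_+,\xi_-)$; hence that L\'evy measure is automatically supported by pure jump (in particular \cadlag) functions, after which Remark~\ref{rem:rd} yields the series with step-function coordinates. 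If you wish to avoid the point-process detour, you would need an independent argument that the L\'evy measure of a non-decreasing piecewise constant time-stable process charges only piecewise constant functions; that is not routine and is not contained in Proposition~\ref{prop:d}. The remaining points of your proposal (vanishing of the drift, equivalence of \eqref{eq:eps-1} with \eqref{eq:mu-condition-1}, sufficiency via Theorem~\ref{th:lepage-conv}) are consistent with the paper, although your claim in the sufficiency direction that only finitely many summands jump in a bounded interval is asserted rather than proved.
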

\begin{proof}
  Sufficiency is immediate and follows from
  Theorem~\ref{th:lepage-conv}. For the necessity, consider the map
  $f$ from the proof of Corollary~\ref{cor:jump-plus} and note that
  $f(M)$ is an infinitely divisible point process on
  $(0,\infty)\times\R_+^2$. It is well known (see
  e.g. \cite[Th.~10.2.V]{DVJ2}) that such infinitely divisible marked
  point process can be represented as a superposition of point
  configurations that build a Poisson point process on
  $\sM((0,\infty)\times\R_+^2)$. The unique intensity measure
  $\tilde{Q}$ of this Poisson process is called the KLM measure of
  $M$. This measure can also be viewed as the L\'evy measure, see
  \cite[Cor.~6.9]{DMZ}.

  Each point configuration from $\sM((0,\infty)\times\R_+^2)$
  corresponds to a pure jump function. The push-forward of $\tilde{Q}$
  under this correspondence is the L\'evy measure of $(\xi_+,\xi_-)$
  that is supported by pure jump (and so \cadlag) functions. Since the
  components of $(\xi_+,\xi_-)$ are non-negative, Remark~\ref{rem:rd}
  yields its representation as
  \begin{displaymath}
    (\xi_+(t),\xi_-(t))\eqd
    \sum_{i=1}^\infty(\eps'_i(\Gamma_i^{-1}t),\eps''_i(\Gamma_i^{-1}t))\,,
  \end{displaymath}
  so that $\xi$ admits the series representation \eqref{eq:pj-lepage}
  with $\eps=\eps'-\eps''$. 
\end{proof}

\begin{rem}
  In the classical LePage series for random vectors, it is possible to
  scale the directional component to bring its norm to $1$.  However,
  it is not possible in general to rescale the argument of $\{\eps_i,
  i\geq1\}$ from \eqref{eq:pj-lepage} in order to ensure that each
  function has the first jump at time $1$. 
\end{rem}

\begin{rem}
  It is possible to derive Theorem~\ref{thr:pj-lp} from the LePage
  representation of the marked point process $M$ as the union of
  clusters corresponding to the Poisson cluster process determined by
  $\tilde{Q}$. The corresponding series representation then becomes
  \begin{displaymath}
    M=\bigcup_{i=1}^\infty \Gamma_i\circ E_i,
  \end{displaymath}
  where $\{E_i,i\geq1\}$ is a point process on
  $\sM((0,\infty)\times\R_+^2)$ with the intensity measure
  $\tilde{Q}$. 
\end{rem}

If $\eps$ has a single jump only, then \eqref{eq:pj-lepage} yields a
L\'evy process, see Example~\ref{ex:levy-step}. 

\begin{example}
  \label{ex:int-part}
  Let $\eps(t)=[t]$ be the integer part of $t$. Then 
  \begin{displaymath}
    \xi(t)=\sum_{k=1}^\infty N(t/k)\,,
  \end{displaymath}
  where $N(t)$ is the Poisson process. For every $t\geq 0$, the series
  consists of a finite number of summands and so converges almost
  surely. Note that $\xi(t)$ is not integrable for $t>0$. The jump
  sizes of $\xi$ are always one, and the jump times form a point
  process on $\R_+$ obtained as the superposition of the set of
  natural numbers scaled by $\Gamma_i$, $i\geq1$.
\end{example}

\section*{Acknowledgments}
\label{sec:achnowledgements}

The authors are grateful to Zakhar Kabluchko and Rolf Riedi for
discussions and hints at various stages of this work. The second
author is grateful to Steve Evans for discussions concerning general
LePage series. The second author was partially supported by Swiss
National Science Foundation grant 200021-153597.

\end{document}